\def\aa{{\mathcal A}_{>Z}}
\def\o1{\Omega_1}
\def\cm{{\mathcal M}}
\def\mz{\cm_Z}
\def\mzs{\cm^\ast_Z}
\def\ce{{\mathcal E}}
\def\ff{{\mathbb F}}
\def\a2{{\mathcal A}_{\geq 2}}
\def\ap{{\mathcal A}_p}
\def\de{\Delta}
\def\wh{{\widetilde{H}}}
\def\zz{{\mathbb Z}}
\def\cp{{\mathcal P}}
\def\cs{{\mathcal S}}
\def\nn{{\mathbb N}}
\def\ax{x^\ast}
\def\bx{\bar{x}}
\def\ay{y^\ast}
\def\by{\bar{y}}
\def\la{\langle}
\def\ra{\rangle}
\newtheorem{thm}{Theorem}[section]
\newtheorem{lemma}[thm]{Lemma}
\newtheorem{cor}[thm]{Corollary}
\newtheorem{prop}[thm]{Proposition}
\newtheorem{hyp}[thm]{Hypothesis}
\newtheorem{definition}[thm]{Definition}
\begin{document}

\title{Truncated Quillen complexes of $p$-groups}

\author{Francesco Fumagalli}
\address{Dipartimento di Matematica "Ulisse Dini",viale Morgagni 67/a, 50134 Firenze (FI), ITALY}
\email{fumagalli@math.unifi.it}
\author[Shareshian]{John Shareshian$^1$}
\address{Department of Mathematics, Washington University, St. Louis, MO 63130, USA}
\thanks{$^{1}$Supported in part by NSF Grants
 DMS-0902142 and DMS-1202337}
\email{shareshi@math.wustl.edu}

\maketitle

\begin{abstract}
Let $p$ be an odd prime and let $P$ be a $p$-group.  We examine the order complex of the poset of elementary abelian subgroups of $P$ having order at least $p^2$.  S. Bouc and J. Th\'evenaz showed that this complex has the homotopy type of a wedge of spheres.  We show that, for each nonnegative integer $l$, the number of spheres of dimension $l$ in this wedge is controlled by the number of extraspecial subgroups $X$ of $P$ having order $p^{2l+3}$ and satisfying $\Omega_1(C_P(X))=Z(X)$.  We go on to provide a negative answer to a question raised by Bouc and Th\'evenaz concerning restrictions on the homology groups of the given complex.
\end{abstract}

\section{Introduction}

Since the appearance of the seminal papers \cite{Br1,Br2} by K. S. Brown and \cite{Qu} by D. Quillen, group theorists, topologists and combinatorialists have studied connections between the algebraic structure of a group $G$ and the topology of the order complexes of various posets of $p$-subgroups of $G$.  An excellent and extensive description of such activity appears in the book \cite{Sm}.  The complex that has received the most attention is the {\it Quillen complex} $\de\ap(G)$, whose faces are chains of nontrivial elementary abelian subgroups of $G$.  (Relevant terms used in this introduction will be defined in Section \ref{defs}.)

When $P$ is a finite $p$-group, $\de\ap(P)$ is contractible.  So, one cannot learn much about $P$ from the topology of $\de\ap(P)$.  However, S. Bouc and J. Th\'evenaz observed in \cite{BoTh} that if one removes from $\ap(P)$ all subgroups of $P$ having order $p$, things become more interesting.

\begin{definition} Let $p$ be a prime and let $P$ be a finite $p$-group.  We define $\a2(P)$ to be the set of elementary abelian subgroups of $P$ having order at least $p^2$, ordered by inclusion.
\end{definition}

The main result in \cite{BoTh} is as follows

\begin{thm} \label{bt}
Let $p$ be a prime and let $P$ be a finite $p$-group.  The order complex $\de\a2(P)$ has the homotopy type of a wedge of spheres.
\end{thm}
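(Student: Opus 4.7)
\emph{Proof plan.} I would induct on $|P|$. Since every elementary abelian subgroup of $P$ lies in $\o1(P)$, I may assume $P=\o1(P)$. Let $Z:=\o1(Z(P))$, a nontrivial central subgroup of exponent $p$. For any $A\in\a2(P)$ the product $AZ$ is abelian of exponent $p$ (as $Z$ is central and both $A,Z$ are elementary abelian) and still has order at least $p^2$, so the assignment $A\mapsto AZ$ is a closure operator on $\a2(P)$. By Quillen's closure-operator lemma, $\de\a2(P)$ is homotopy equivalent to the order complex of the subposet $\a2(P)^{\ge Z}:=\{A\in\a2(P):Z\le A\}$.

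If $|Z|\ge p^2$, then $Z$ itself is the minimum of $\a2(P)^{\ge Z}$, so this complex is contractible and the theorem holds with the (empty) wedge. The real case is $|Z|=p$, say $Z=\la z\ra$. Here $\a2(P)^{\ge Z}$ consists of the elementary abelian $A\ni z$ with $|A|\ge p^2$; equivalently, of the nontrivial elementary abelian $\bar A\le P/\la z\ra$ whose preimage in $P$ is both abelian and of exponent $p$. These two lifting conditions are the vanishing of a commutator pairing $\bar A\times\bar A\to\la z\ra$ and of a $p$-power map $\bar A\to\la z\ra$ coming from the extension $1\to\la z\ra\to A\to\bar A\to1$, and they are what bring extraspecial subgroups of $P$ into the picture.

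In this hard case my plan is to build an acyclic matching on the face poset of $\de\a2(P)^{\ge Z}$, pivoting on the central element $z$: informally, pair a chain $A_1<\cdots<A_k$ with the chain obtained by toggling the presence of a canonical subgroup such as $A_iZ$ for the smallest index where it fits, so that unmatched (critical) chains are flags of isotropic subspaces in the symplectic quotient of a suitable extraspecial subgroup $X\le P$ with $\o1(C_P(X))=Z(X)$. The main obstacle will be verifying that such a matching is genuinely acyclic and that the critical cells are spheres of the correct dimensions rather than forming a more complicated CW complex. A natural fallback is the covering $\a2(P)=\bigcup_M\a2(M)$ over maximal subgroups $M$, available whenever $P$ itself is not elementary abelian (in which case $P$ is a cone point and the complex is trivially contractible), combined with the Nerve Theorem and the inductive hypothesis; but recovering the wedge-of-spheres form from the resulting homotopy colimit of wedges of spheres requires essentially the same combinatorial work as the matching.
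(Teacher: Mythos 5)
Your opening reduction is sound and matches the paper's: the closure operator $A\mapsto AZ$ with $Z=\Omega_1(Z(P))$ (the Quillen fiber lemma) disposes of the case where $Z(P)$ is noncyclic and otherwise replaces $\mathcal{A}_{\geq 2}(P)$ by the subposet of members containing the central subgroup $Z$ of order $p$. From that point on, however, you have a plan rather than a proof, and the plan has a genuine gap at exactly the place where all the work lies. An acyclic matching whose critical cells sit in several different dimensions does not by itself yield a wedge of spheres: discrete Morse theory only produces a CW complex with one cell per critical cell, and you would still have to control the attaching maps, which you acknowledge but do not address. Moreover the matching is never defined; the ``toggle $A_iZ$'' pairing you sketch is the standard device for exhibiting a cone point, and in the truncated poset every subgroup already contains $Z$, so $A_iZ=A_i$ and there is nothing to toggle. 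Your fallback, covering $\mathcal{A}_{\geq 2}(P)$ by the $\mathcal{A}_{\geq 2}(M)$ over maximal subgroups $M\leq P$, also fails as stated: the pairwise intersections are $\mathcal{A}_{\geq 2}(M_1\cap M_2)$, which by induction are wedges of spheres and in general not contractible, so the nerve theorem leaves you with a homotopy colimit you cannot evaluate.

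The paper (for odd $p$) closes this gap with a differently chosen cover. With $Z=\Omega_1(Z(P))$ of order $p$, let $\mathcal{M}_Z(P)$ be the poset of subgroups $X$ with $Z<X=\Omega_1(X)$ and $[X,X]\leq Z$; these have nilpotence class at most two, hence exponent $p$ by Hall's theorem, and every truncated elementary abelian subgroup lies in a maximal member of $\mathcal{M}_Z(P)$. The complex is the union of the subcomplexes indexed by these maximal members, and three facts make the wedge lemma (Lemma \ref{ulem}) applicable: a member of $\mathcal{M}_Z(P)$ that is not extraspecial has contractible complex (Lemma \ref{cont}); the intersection of two or more maximal members lies in $\mathcal{M}_Z(P)$ and is never extraspecial (Lemma \ref{mzsint}, the hardest step, resting on Lemma \ref{es} and the existence of a normal elementary abelian subgroup of order $p^2$); and the extraspecial pieces are wedges of spheres in a single dimension by Bornand's Proposition \ref{extra}. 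Your instinct that extraspecial subgroups $X$ with $\Omega_1(C_P(X))=Z(X)$ and their symplectic quotients carry the homology is exactly right, but the route to that conclusion is the contractible-intersection cover, not a Morse matching. Note finally that this argument is restricted to odd $p$, so the full statement for $p=2$ still requires the original Bouc--Th\'evenaz proof, which your proposal does not supply either.
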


It is natural to ask how many spheres of each dimension appear in the wedge described in Theorem \ref{bt}.  Equivalently, one can ask for the rank of each homology group of $\de\a2(P)$.  Bouc and Th\'evenaz observed that if $P$ is a $3$-group of order at most $3^6$ then $\de\a2(P)$ has at most one nontrivial reduced homology group.  They observed also that if $P$ s a $2$-group of order at most $2^9$ then $\de\a2(P)$ has at most two nontrivial homology groups, and if $\wh_i(\de\a2(P))$ and $\wh_j(\de\a2(P))$ are nontrivial then $|i-j| \leq 1$.  They asked whether the phenomena just described persist for larger $p$-groups.  A partial positive answer to their question was given by D. Bornand in \cite{Bor}.

\begin{thm}[\cite{Bor}, Corollary 4.12]  \label{born}
Let $p$ be a prime and let $P$ be a finite $p$-group.  Assume $[P,P]$ is cyclic.  If $p$ is odd then $\de\a2(P)$ has at most one nontrivial reduced homology group.  If $p=2$ then $\de\a2(P)$ has at most two nontrivial homology groups, and if $\de\a2(P)$ has two nontrivial homology groups then these groups appear in consecutive dimensions.
\end{thm}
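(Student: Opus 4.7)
My plan is to identify $\de\a2(P)$, up to homotopy, with the order complex of isotropic subspaces in a symplectic $\fp$-space, whose homotopy type is classically a wedge of spheres concentrated in a single dimension.

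\textbf{Central closure.} The key preliminary observation is that the unique subgroup $T \leq [P,P]$ of order $p$ is central in $P$. Indeed, $P/C_P([P,P])$ embeds as a $p$-subgroup of $\mathrm{Aut}([P,P])$, and every $p$-power automorphism of a cyclic $p$-group fixes its order-$p$ socle. Since $T$ is central of order $p$, the assignment $H \mapsto HT$ is a well-defined closure operator on $\a2(P)$: the product $HT$ is elementary abelian whenever $H$ is, because $T$ commutes with $H$ and has exponent $p$. By a standard lemma in poset topology, $\de\a2(P)$ is then homotopy equivalent to the order complex of the subposet of elementary abelian subgroups of $P$ that contain $T$.

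\textbf{The base case via symplectic geometry.} Via the bijection $H \leftrightarrow H/T$, this subposet corresponds to the nontrivial subgroups $K \leq P/T$ whose preimage in $P$ is elementary abelian: equivalently, $K$ is elementary abelian, the induced commutator form $K \times K \to T$ vanishes, and the resulting central extension splits. When $|[P,P]| = p$, so that $P/T$ is abelian, the group $P$ is a central product $E \ast A$ of an extraspecial group $E$ of order $p^{2n+1}$ with an elementary abelian group $A$, and the subposet becomes precisely the poset of nonzero totally isotropic subspaces of $V = P/T$ under the commutator form (whose radical is $A/T$). The order complex of this poset is the classical symplectic building with radical, which is shellable and has the homotopy type of a wedge of spheres of dimension $n-1$. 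For $p$ odd and $|[P,P]| > p$, I would induct on $|[P,P]|$: the quotient $P/T$ again has cyclic derived subgroup of smaller order, and the splitting condition on the relevant central extensions is compatible with the inductive hypothesis, preserving one-dimensional homology concentration.

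\textbf{The case $p = 2$ and the main obstacle.} For $p = 2$ the central closure still applies, but the structure of $\o1(P)$ is richer than in the odd case: one may encounter elements of order $4$ in $P$, and the relevant $2$-groups include dihedral, generalized quaternion, and semidihedral factors. This additional flexibility allows homology to appear in one extra dimension, consecutive to the one produced by the isotropic-subspace analysis. The main obstacle is showing that no further dimensions contribute. I would approach this through a case-by-case classification of the possible structures of $\o1(P)$ compatible with $[P,P]$ being cyclic, combined with a direct homotopy-type computation of the associated isotropic subspace complex in each case, verifying that at most two adjacent dimensions of homology remain.
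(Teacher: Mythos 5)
The paper does not prove this statement at all --- it is quoted verbatim from Bornand (\cite{Bor}, Corollary 4.12) and used as a black box --- so your proposal can only be judged on its own terms, and on those terms it has genuine gaps. Your opening reduction is fine: the socle $T$ of the cyclic group $[P,P]$ is indeed central, and the closure $H \mapsto HT$ correctly reduces $\de\a2(P)$ to the complex of elementary abelian subgroups containing $T$. The trouble starts with the ``base case.'' The structural claim that $|[P,P]|=p$ forces $P$ to be a central product of an extraspecial group with an \emph{elementary abelian} group is false: the group $P=\langle a,b \mid a^{p^2}=b^{p^2}=1,\ [a,b]=a^p\rangle$ has $|[P,P]|=p$, but every maximal subgroup of $P$ contains $Z(P)=\langle a^p,b^p\rangle$ and is therefore abelian, so $P$ contains no extraspecial subgroup whatsoever. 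Relatedly, you correctly note that a subgroup $K \leq P/T$ corresponds to an elementary abelian subgroup of $P$ only if $K$ is isotropic \emph{and} the preimage of $K$ splits (contains no elements of order $p^2$), but you then silently drop the splitting condition and identify the poset with the full poset of nonzero isotropic subspaces. That identification is exactly what fails for groups that are not of exponent $p$, and controlling the subposet cut out by the splitting condition is the real content of Bornand's theorem; the answer for an extraspecial group of exponent $p^2$ already differs from the exponent-$p$ case covered by Proposition \ref{extra}.

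The two remaining steps are statements of intent rather than arguments. The induction on $|[P,P]|$ asserts that ``the splitting condition \ldots is compatible with the inductive hypothesis, preserving one-dimensional homology concentration,'' but no mechanism is given for transporting homotopy-type information from the relevant subposet of subgroups of $P/T$ back to $P$, and it is not even clear what poset the inductive hypothesis is being applied to. For $p=2$ you only name the phenomena (quaternion, dihedral, semidihedral sections) and promise a case analysis; since the appearance of homology in two \emph{consecutive} dimensions --- and in no others --- is precisely the assertion to be proved, this part of the theorem is not addressed at all. If you want a proof within the framework of this paper, note that for odd $p$ Corollary \ref{main2} reduces the claim to showing that when $[P,P]$ is cyclic all members of $\ce(P)$ have the same order; that is a concrete group-theoretic statement you could try to attack directly, and it would be a genuinely different (and shorter) route than Bornand's, though it says nothing about $p=2$.
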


Our main results are Theorem \ref{main1} and Corollaries \ref{main2},  \ref{main3}  and \ref{main4} below.  The point of Corollary \ref{main2} is that determining the homotopy type of $\de\a2(P)$ is the same as enumerating certain subgroups of $P$.  Corollaries \ref{main3} and \ref{main4} provide negative answers to the question of Bouc and Th\'evenaz.

\begin{thm} \label{main1}
Let $p$ be an odd prime and let $P$ be a noncyclic finite $p$-group.  Let $\ce(P)$ be the set of subgroups $X \leq P$ such that
\begin{itemize}
\item $X$ is extraspecial of exponent $p$, and
\item $\Omega_1(C_P(X))=Z(X)$.
\end{itemize}
Then $$\de\a2(P) \simeq \bigvee_{X \in \ce(P)}\de\a2(X).$$
\end{thm}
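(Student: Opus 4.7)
The plan is to establish the homotopy equivalence by a sequence of poset-theoretic closure operators, eventually identifying the reduced poset with the claimed wedge via a structural correspondence to $\ce(P)$. As a first reduction, if $|\Omega_1(Z(P))| \geq p^2$, then $A \mapsto A \cdot \Omega_1(Z(P))$ is a closure operator on $\a2(P)$ whose image has $\Omega_1(Z(P))$ itself as a minimum, so $\Delta\a2(P)$ is contractible. In this case $\ce(P) = \emptyset$: any $X \in \ce(P)$ would satisfy $\Omega_1(Z(P)) \leq \Omega_1(C_P(X)) = Z(X)$, contradicting $|Z(X)| = p$. Thus both sides are points and we may assume $\Omega_1(Z(P)) = \langle z \rangle$ is cyclic of order $p$.

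Next, since $z$ is central of order $p$, the map $A \mapsto A\langle z\rangle$ is a well-defined closure operator on $\a2(P)$, giving $\Delta\a2(P) \simeq \Delta\{A \in \a2(P) : z \in A\}$. Every $X \in \ce(P)$ satisfies $z \in Z(X)$: indeed $z \in Z(P) \leq C_P(X)$ has order $p$, so $z \in \Omega_1(C_P(X)) = Z(X)$, forcing $Z(X) = \langle z\rangle$. Applying the same closure to each $\a2(X)$, the theorem reduces to showing
\[
\Delta\{A \in \a2(P) : z \in A\} \simeq \bigvee_{X \in \ce(P)} \Delta\{A \in \a2(X) : z \in A\}.
\]

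The main step is then to produce a canonical decomposition of the reduced poset indexed by $\ce(P)$. One natural approach: for each elementary abelian $A$ containing $z$, consider the commutator pairing on $C_P(A)/A$ with values in $\langle z\rangle$, producing an alternating bilinear form; from its radical and non-degenerate part construct an associated extraspecial subgroup $X(A) \leq P$, with the $\ce(P)$-condition $\Omega_1(C_P(X)) = Z(X)$ corresponding precisely to non-degeneracy of the pairing. Such an assignment should partition the reduced poset (up to contractible overlaps) into pieces matching the wedge summands, which can then be assembled via a nerve or Mayer--Vietoris argument.

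The chief obstacle is making this construction rigorous: producing $X(A)$ canonically in $\ce(P)$ and controlling overlaps so that distinct summands intersect only in a contractible ``wedge point'' subcomplex. In particular one must rule out collapses that would create or destroy wedge summands, and identify exactly when an elementary abelian $A$ contributes to more than one extraspecial and when its contributions cancel. The hypothesis that $p$ is odd is essential throughout, since the notion of extraspecial group of exponent $p$ (central to the definition of $\ce(P)$) exists only for odd primes, and the commutator-versus-power structure of $p$-group extensions — needed to lift elementary abelian quotients to elementary abelian subgroups — behaves very differently when $p = 2$.
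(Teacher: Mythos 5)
Your two opening reductions are correct and coincide with the paper's: when $\Omega_1(Z(P))$ is noncyclic the complex is contractible and $\ce(P)=\emptyset$, and otherwise one passes to the subposet of elementary abelians containing $Z=\langle z\rangle=\Omega_1(Z(P))$ via the closure $A\mapsto AZ$. But everything after that is a plan rather than a proof, and the plan as stated has a concrete flaw: you propose to index the decomposition directly by $\ce(P)$, assigning to each elementary abelian $A\ni z$ a ``canonical'' extraspecial $X(A)\in\ce(P)$ built from the commutator form on $C_P(A)/A$. No such canonical assignment exists in general, and worse, the subcomplexes $\de\aa(X)$ for $X\in\ce(P)$ need not even cover the complex: there are $p$-groups with cyclic center in which some rank-$2$ elementary abelian lies in no member of $\ce(P)$ at all (e.g.\ the modular group of order $p^3$, where $\ce(P)=\emptyset$ yet $\a2(P)\neq\emptyset$). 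So a partition ``up to contractible overlaps'' indexed by $\ce(P)$ cannot be the right structure, and you have not supplied the mechanism that controls the (genuinely nonempty) intersections or absorbs the leftover pieces.

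The paper's route fills exactly this hole. It covers the complex by $\de\aa(X)$ for $X$ ranging over the \emph{maximal} elements of the larger poset $\mz(P)=\{X: Z<X=\Omega_1(X),\ [X,X]\leq Z\}$ (class $\leq 2$, hence exponent $p$ by Hall's theorem --- this, not the mere existence of exponent-$p$ extraspecials, is where $p$ odd enters). Three lemmas then do the work: a maximal $X$ is extraspecial iff $\Omega_1(C_P(X))=Z$, so the extraspecial maximal members are precisely $\ce(P)$; non-extraspecial members have $Z(X)>Z$ and contribute contractible subcomplexes; and --- the hardest step, which your sketch does not address --- every intersection of two or more maximal members lies in $\mz(P)$, properly contains $Z$ (via a normal elementary abelian of order $p^2$), and is never extraspecial, so all multi-fold intersections are contractible and the Kratzer--Th\'evenaz wedge lemma applies. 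Until you can prove an analogue of that intersection statement and account for the elementary abelians not covered by $\ce(P)$, the main step of your argument is missing.
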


We prove Theorem \ref{main1} in Section \ref{pr1}.  

When $X$ is extraspecial of exponent $p$, the homotopy type of $\de\a2(X)$ is known.

\begin{prop}[See \cite{Bor}, Proposition 4.7] \label{extra}
Let $p$ be an odd prime and let $X$ be an extraspecial group of exponent $p$ and order $p^{2n+1}$.  Then $\de\a2(X)$ has the homotopy type of a wedge of $p^{n^2}$ spheres of dimension $n-1$.
\end{prop}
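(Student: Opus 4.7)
The plan is to identify $\de\a2(X)$, up to homotopy, with the Tits building of $\mathrm{Sp}_{2n}(\fp)$, and then to invoke the Solomon--Tits theorem.

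I start by recording the standard structural facts. Set $Z = Z(X)$; then $|Z| = p$, $[X,X] = Z$, and $V := X/Z$ is an $\fp$-vector space of dimension $2n$. Because $X$ has nilpotency class $2$ and exponent $p$, the commutator operation on $X$ descends to a map $\beta : V \times V \to Z \cong \fp$. Bilinearity of $\beta$ follows from the identities $[xy,z] = [x,z][y,z]$ and $[x,yz] = [x,y][x,z]$ (valid since $[X,X]$ is central), alternation follows from $[a,a] = 1$, and nondegeneracy is equivalent to $Z(X) = Z$.

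Next I carry out a reduction via a closure operator. For any $A \in \a2(X)$, the subgroup $AZ$ is abelian (as $Z$ is central), has exponent $p$, and has order at least $|A| \geq p^2$, so $AZ \in \a2(X)$. The assignment $A \mapsto AZ$ is then order-preserving, increasing, and idempotent. By the standard homotopy lemma for closure operators on posets, $\de\a2(X)$ is homotopy equivalent to the order complex of the subposet $\{A \in \a2(X) : Z \leq A\}$. The map $A \mapsto A/Z$ is a poset isomorphism from this subposet onto the poset $\mathcal{I}$ of nonzero totally isotropic subspaces of $(V,\beta)$, since a subgroup $A$ containing $Z$ is abelian precisely when $\beta$ vanishes on $A/Z$.

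Finally, $\de\mathcal{I}$ is (the geometric realization of) the Tits building of type $C_n$ associated to $\mathrm{Sp}_{2n}(\fp)$. By the Solomon--Tits theorem, this rank-$n$ spherical building is homotopy equivalent to a wedge of spheres of dimension $n-1$, and the number of spheres equals $|U|$, where $U$ is the unipotent radical of a Borel subgroup of $\mathrm{Sp}_{2n}(\fp)$. Since the root system $C_n$ has $n^2$ positive roots, $|U| = p^{n^2}$, and the proposition follows. The main obstacle is really just bookkeeping: one must correctly identify $\de\mathcal{I}$ with the full $C_n$-building (with Lagrangian subspaces playing the role of one class of vertices) and then read off $p^{n^2}$ either from the known structure of Borel subgroups of $\mathrm{Sp}_{2n}(\fp)$ or from a direct count of positive roots in $C_n$; everything else is a clean appeal to standard poset-topology and building-theoretic tools.
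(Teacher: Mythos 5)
The paper itself does not prove this proposition; it is imported verbatim from Bornand \cite{Bor}. Your argument is correct and is the standard one underlying that result: the closure operator $A \mapsto AZ$ identifies $\de\a2(X)$ up to homotopy with the poset of nonzero totally isotropic subspaces of $X/Z$ under the commutator form (exponent $p$ guarantees that every abelian subgroup containing $Z$ is elementary abelian, so the correspondence is exact), this poset is the $C_n$-building of $\mathrm{Sp}_{2n}(\fp)$, and Solomon--Tits gives a wedge of $(n-1)$-spheres whose number is the order $p^{n^2}$ of a Sylow $p$-subgroup of $\mathrm{Sp}_{2n}(\fp)$. All steps check out, including the count of $n^2$ positive roots for $C_n$.
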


Note that when $P$ is a cyclic $p$-group, $\de\a2(P)=\{\emptyset\}$.  By convention, (the geometric realization of) $\de\a2(P)$ is thus the $(-1)$-sphere $S^{-1}$. Combining this fact with Theorem \ref{main1} and Proposition \ref{extra}, we get a new proof of Theorem \ref{bt} when $p$ is odd.  In addition, we get the following result.

\begin{cor} \label{main2}
Let $p$ be an odd prime and let $P$ be a finite $p$-group.  For each positive integer $n$, let $a_n(P)$ be the number of subgroups $X \leq P$ such that \begin{itemize} \item $X$ is extraspecial of exponent $p$ and order $p^{2n+1}$,  and \item $\Omega_1(C_P(X))=Z(X)$.  \end{itemize} Then, for each nonnegative integer $l$, $$\wh_l(\de\a2(P)) \cong \zz^{a_{l+1}(P)p^{(l+1)^2}}.$$
\end{cor}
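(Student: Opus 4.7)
The plan is to combine Theorem \ref{main1} with Proposition \ref{extra} via the fact that reduced homology converts a wedge into a direct sum. There is no substantial new idea required; the whole content is bookkeeping, and the only care needed is in handling the boundary case when $P$ is cyclic (or, more generally, when $\ce(P)$ is empty), which is settled by the $S^{-1}$ convention declared just before the corollary.

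First I would dispose of the cyclic case. If $P$ is cyclic then $\a2(P)=\emptyset$, so by convention $\de\a2(P)=S^{-1}$, whose reduced homology vanishes in every nonnegative degree; on the other side, a cyclic $p$-group contains no extraspecial subgroup at all, so $a_n(P)=0$ for every $n\geq 1$ and both sides of the claimed isomorphism are zero. (The same check covers the possibility that $P$ is noncyclic but $\ce(P)$ happens to be empty, e.g.\ when $P$ is abelian; then by Theorem \ref{main1} the space $\de\a2(P)$ is contractible and hence has vanishing reduced homology, while again $a_n(P)=0$ for all $n$.)

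Assuming $P$ is noncyclic and $\ce(P)\neq\emptyset$, Theorem \ref{main1} provides the homotopy equivalence
$$\de\a2(P) \simeq \bigvee_{X\in\ce(P)} \de\a2(X).$$
Passing to reduced singular homology and using the standard isomorphism $\wh_\ast(\bigvee_\alpha Y_\alpha)\cong \bigoplus_\alpha \wh_\ast(Y_\alpha)$ for wedges of (pointed) finite CW complexes, I obtain $\wh_l(\de\a2(P))\cong \bigoplus_{X\in\ce(P)}\wh_l(\de\a2(X))$ for every $l\geq 0$. Each $X\in\ce(P)$ is extraspecial of exponent $p$, so writing $|X|=p^{2n+1}$ with $n\geq 1$ and invoking Proposition \ref{extra}, the space $\de\a2(X)$ is homotopy equivalent to a wedge of $p^{n^2}$ spheres of dimension $n-1$; hence $\wh_l(\de\a2(X))\cong \zz^{p^{n^2}}$ when $l=n-1$ and is zero otherwise. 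For fixed $l\geq 0$, the only subgroups $X\in\ce(P)$ that contribute to $\wh_l(\de\a2(P))$ are therefore those of order $p^{2(l+1)+1}=p^{2l+3}$, of which there are exactly $a_{l+1}(P)$ by definition, and each contributes $\zz^{p^{(l+1)^2}}$. Summing yields
$$\wh_l(\de\a2(P))\cong \zz^{a_{l+1}(P)p^{(l+1)^2}},$$
as required. Since the argument is a direct assembly of two previously stated results, I do not foresee any genuine obstacle beyond being careful to separate the degenerate cases and to match the indices $n$ and $l$ correctly.
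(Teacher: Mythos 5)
Your proof is correct and follows exactly the route the paper intends: the paper derives Corollary \ref{main2} by combining Theorem \ref{main1}, Proposition \ref{extra}, and the additivity of reduced homology over wedges, with the $S^{-1}$ convention handling the cyclic case. Your explicit treatment of the degenerate cases ($P$ cyclic, or $\ce(P)=\emptyset$) is careful bookkeeping the paper leaves implicit, but it is the same argument.
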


Applying Theorem \ref{main1} to certain split extensions of extraspecial $p$-groups by automorphisms of order $p$, we prove the following result in Section \ref{pr3}.

\begin{cor} \label{main3}
Let $t$ be nonnegative integer and let $k$ be a positive integer.  For each prime $p>2k+3$ there exists a group $P$ of order $p^{2(t+k+2)}$ such that the only nontrivial homology groups of $\de\a2(P)$ are  $\wh_t(\de\a2(P))$ and $\wh_{k+t}(\de\a2(P))$.
\end{cor}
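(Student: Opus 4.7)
The plan is to exhibit a specific split extension $P = E \rtimes \la\alpha\ra$ for which $\ce(P)$ contains $E$ (of order $p^{2m+1}$ with $m := t+k+1$) together with some extraspecial subgroups of order $p^{2t+3}$ and nothing else; Corollary \ref{main2} will then immediately give the desired homology vanishing.

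For the construction, let $E$ be the extraspecial group of exponent $p$ and order $p^{2m+1}$, write $V = E/Z(E)$ as an $\fp$-symplectic space, and fix an orthogonal decomposition $V = V_1 \perp V_2$ with $\dim V_1 = 2t$ and $\dim V_2 = 2(k+1)$. Pick $\bar\alpha \in \mathrm{Sp}(V)$ acting as the identity on $V_1$ and as a regular unipotent element (single Jordan block of size $2(k+1)$) on $V_2$. The hypothesis $p > 2k+3$ guarantees that the nilpotency index of $N := \bar\alpha - 1$ is less than $p$, so $\bar\alpha$ has order $p$; the same hypothesis makes $P$ have exponent $p$. Using the semidirect decomposition $\mathrm{Aut}(E) = V^* \rtimes \mathrm{Sp}(V)$ available for $p$ odd, lift $\bar\alpha$ to an order-$p$ automorphism $\alpha$ of $E$, then further adjust by a central automorphism so that $\alpha$ fixes the extraspecial lift $E_1$ of $V_1$ pointwise and satisfies $\alpha(\tilde v_1) = \tilde v_1 z_1$ for a fixed lift $\tilde v_1 \in E_2$ of a nonzero $\bar\alpha$-fixed vector in $V_2$ and some chosen $z_1 \in Z(E) \setminus \{1\}$. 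Set $P := E \rtimes \la\alpha\ra$; then $|P| = p^{2(t+k+2)}$.

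Two members of $\ce(P)$ appear quickly. Since $\alpha$ is outer, $C_P(E) = Z(E)$, so $E \in \ce(P)$. Define $A := \la E_1, \tilde v_1, \alpha\ra$; the facts that $\alpha$ centralizes $E_1$, that $[E_1, \tilde v_1] = 1$ by the central product structure, and that $[\alpha, \tilde v_1] = z_1$ together imply all commutators in $A$ lie in $Z(E)$, so $A$ has class $2$ with $[A,A] = Z(E)$. A dimension count yields $|A| = p^{2t+3}$; the commutator form on $A/Z(E)$ is the orthogonal sum of the nondegenerate form on $V_1$ with the hyperbolic plane on the images of $\alpha$ and $\tilde v_1$, making $A$ extraspecial. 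A short intersection argument gives $C_P(A) = C_P(E_1) \cap C_P(\tilde v_1) \cap C_P(\alpha) = Z(E)$, so $A \in \ce(P)$.

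The main obstacle is showing $\ce(P)$ contains nothing else. First $Z(P) = Z(E)$ (otherwise a nontrivial power of $\alpha$ would have to be inner in $E$, contradicting $\bar\alpha \neq 1$), so every $X \in \ce(P)$ has $Z(X) = Z(E)$. If $X \leq E$ is extraspecial of order $p^{2n+1}$, then $C_E(X)$ has order $p^{2(m-n)+1}$ and exponent $p$, forcing $n = m$ and $X = E$. Otherwise put $X_0 := X \cap E$ of order $p^{2n}$; the $V$-component of the commutator $[e\alpha, y]$ for $y \in X_0$ is $N(\bar y)$, so $[X_0, X] \leq Z(E)$ forces $\overline{X_0} \leq \ker N$. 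Since $\dim \ker N = 2t+1$ and $\dim \overline{X_0} = 2n-1$, one gets $n \leq t+1$. To eliminate the case $n < t+1$ I would use the short exact sequence
$$1 \to C_P(X)/Z(E) \to \{g \in P : [g,X] \leq Z(E)\}/Z(E) \to \mathrm{Hom}(X/Z(E), Z(E)) \to 1,$$
where the right-hand map is $g \mapsto [g, \cdot]$ and is surjective by nondegeneracy of the extraspecial form on $X/Z(E)$. The middle term equals $C_{P/Z(E)}(X/Z(E))$ and has order $p^{2t+2}$, because $\overline{X_0} \subseteq \ker N$ is centralized by all of $P/Z(E)$ and so the centralizer of $X/Z(E)$ in $P/Z(E)$ reduces to the centralizer of a single element outside $V$. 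This forces $|C_P(X)| = p^{2(t+1-n)+1} > p$, contradicting $X \in \ce(P)$. Hence $a_n(P) = 0$ for $n \notin \{t+1, m\}$, and Corollary \ref{main2} concludes.
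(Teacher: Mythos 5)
Your construction is genuinely different from the paper's. The paper first builds a ``small'' group of order $p^{2k+4}$ (a semidirect product of an extraspecial group of order $p^{2k+3}$ by a lift of a regular unipotent symplectic element, so that $\mzs$ contains extraspecial members only of orders $p^{3}$ and $p^{2k+3}$, giving homology in degrees $0$ and $k$), and then obtains the general case by forming a central product with an extraspecial group of order $p^{2t+1}$, invoking Lemma \ref{techlem} to see that this shifts all homology degrees up by $t$. You instead build the whole group in one step, acting on a larger extraspecial group by an automorphism that is trivial on a nondegenerate $2t$-dimensional piece and regular unipotent on the complement. Both routes work. The paper's two-stage approach localizes the hard classification of $\ce(\cdot)$ to the small group and reuses Lemma \ref{techlem} (which it needs anyway for Corollary \ref{main4}); your approach is self-contained but forces you to classify $\ce(P)$ directly in the large group. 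Your key new ingredient for that classification --- the exact sequence $1 \to C_P(X)/Z \to C_{P/Z}(X/Z) \to \mathrm{Hom}(X/Z,Z)\to 1$, surjective on the right by nondegeneracy of the commutator form, combined with $\overline{X_0}\leq \ker N \leq Z(P/Z)$ so that $|C_{P/Z}(X/Z)|=|C_{P/Z}(\overline{e\alpha^j})|=p^{2t+2}$ --- is correct and is a nice replacement for the paper's central-product bookkeeping. I checked the resulting count $|C_P(X)|=p^{2(t+1-n)+1}$ and the identification of $\ce(P)$ as $\{E\}$ together with extraspecial subgroups of order $p^{2t+3}$; this is right, and Corollary \ref{main2} then gives exactly the claimed homology.

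One step needs repair: your assertion that the hypothesis $p>2k+3$ ``makes $P$ have exponent $p$.'' Hall's theorem (Theorem \ref{nc}) requires the nilpotence class of $P$ to be less than $p$, and the only bound your write-up supplies is the order-based one, $2(t+k+2)-1$, which exceeds $p$ once $t$ is large. The claim is nevertheless true, but for a reason you must state: since $N=\bar\alpha-1$ satisfies $N^{2k+2}=0$, one has $\gamma_{j+1}(P)Z/Z = N^{j}(V)$ for $j\geq 1$, hence $\gamma_{2k+3}(P)\leq Z$ and $\gamma_{2k+4}(P)=1$, so the class of $P$ is at most $2k+3<p$ independently of $t$. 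This is not cosmetic: exponent $p$ is exactly what you use at the end to pass from $|C_P(X)|>p$ to $\Omega_1(C_P(X))>Z$ (otherwise a cyclic centralizer of order $p^2$ would defeat the argument). With that one line added, and with the routine verification that the inner-automorphism adjustments preserve the order of $\alpha$ (they do, because $\sum_{i=0}^{p-1}\bar\alpha^{\,i}=0$ on $V$ when $p>2k+3$), your proof is complete.
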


Further constructions using central products, also found in Section \ref{pr3}, yield the following result.

\begin{cor} \label{main4}
Let $\Omega$ be the smallest collection of subsets of the set $\nn_0$ of nonnegative integers satisfying \begin{enumerate} \item if $I \subseteq \nn_0$ and $|I| \leq 2$, then $I \in \Omega$, and \item if $I,J \in \Omega$, then $$1+I+J:=\{1+i+j:i \in I,j \in J\} \in \Omega.$$ \end{enumerate} For each $I \in \Omega$, there exists an integer $N(I)$ such that for every prime $p>N(I)$, there exists a $p$-group $P$ of exponent $p$ satisfying $$\wh_i(\de\a2(P)) \neq 0 \mbox{ if and only if } i \in I.$$ 
\end{cor}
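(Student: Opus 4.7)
The plan is to prove Corollary \ref{main4} by induction on the construction of $I \in \Omega$. For each such $I$, we build a $p$-group $P(I)$ of exponent $p$ and cyclic center of order $p$ that realizes $I$; the center condition is carried along as an auxiliary inductive hypothesis that enables the inductive step. For the base cases $|I| \leq 2$: if $I = \emptyset$, take $P$ cyclic of order $p$, so $\de\a2(P) \simeq S^{-1}$ has trivial reduced homology in nonnegative dimensions; if $I = \{l\}$, take $P$ extraspecial of exponent $p$ and order $p^{2l+3}$, whence Proposition \ref{extra} gives $\wh_l(\de\a2(P))\neq 0$ as the sole nonvanishing reduced homology group; and if $I = \{l,m\}$ with $l<m$, invoke Corollary \ref{main3} with $t=l$, $k=m-l$ and $p>2k+3$, verifying that the split extension constructed in Section \ref{pr3} can be arranged to have exponent $p$ and center of order $p$.

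For the inductive step, let $P_I,P_J$ realize $I,J\in\Omega$, identify their centers with a common subgroup $Z$ of order $p$, and form $P := P_I \circ_Z P_J$. Because $[P_I,P_J]=1$ in $P$ and both factors have exponent $p$, one has $(xy)^p = x^py^p = 1$ for $x\in P_I$, $y\in P_J$, so $P$ has exponent $p$; and any element of $Z(P)$ projects centrally to both factors, so $Z(P) = Z$, propagating the auxiliary hypothesis. The key claim to establish is
$$\ce(P) \;=\; \{\, X_I \circ_Z X_J : X_I \in \ce(P_I),\; X_J \in \ce(P_J)\,\}. \qquad (\ast)$$
Granted $(\ast)$, since $|X_I\circ_Z X_J| = p^{2(n_I+n_J)+1}$ whenever $|X_I| = p^{2n_I+1}$ and $|X_J| = p^{2n_J+1}$, Corollary \ref{main2} yields $\wh_l(\de\a2(P)) \neq 0$ iff $l = (n_I-1)+(n_J-1)+1$ for some such pair, iff $l \in 1+I+J$. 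Setting $N(1+I+J) := \max\{N(I),N(J)\}$ completes the induction.

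The inclusion $\supseteq$ in $(\ast)$ is a direct computation: for $X := X_I \circ_Z X_J$, evaluating $[p_Ip_J,x_I]$ for $x_I\in X_I$ (and symmetrically for $x_J\in X_J$) using $[P_I,P_J]=1$ shows $C_P(X) = C_{P_I}(X_I)\cdot C_{P_J}(X_J)$. Because $P_I$ has exponent $p$, $\o1(C_{P_I}(X_I)) = C_{P_I}(X_I)$, so the hypothesis $X_I\in\ce(P_I)$ upgrades to $C_{P_I}(X_I) = Z$, and similarly for $J$. Hence $C_P(X) = Z = Z(X)$, giving $X\in\ce(P)$.

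The main obstacle is the reverse inclusion $\subseteq$ in $(\ast)$. Given $X\in\ce(P)$, we have $Z = Z(P) \leq C_P(X)$, so $Z \leq \o1(C_P(X)) = Z(X)$, forcing $Z(X)=Z$; and the exponent-$p$ hypothesis then upgrades this to $C_P(X) = Z$. Setting $U := X\cap P_I$ and $V := X\cap P_J$, the relation $[P_I,P_J]=1$ makes $U/Z$ and $V/Z$ mutually orthogonal for the symplectic form on $X/Z$ induced by commutation. The crux is to show that $C_P(X) = Z$ forces $X = UV$ with $U,V$ both extraspecial, $U\in\ce(P_I)$ and $V\in\ce(P_J)$; equivalently, one must rule out proper ``diagonal'' extraspecial subgroups of $P$ satisfying $C_P(X)=Z$. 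Carrying out this structural analysis of extraspecial subgroups of central products of exponent-$p$ groups is the main technical hurdle.
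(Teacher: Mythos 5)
Your overall strategy coincides with the paper's: realize the base cases via extraspecial groups and Corollary \ref{main3}, then induct by forming central products over a common center of order $p$. The base cases, the forward inclusion of your key claim $(\ast)$, and the dimension bookkeeping are all fine. However, the reverse inclusion of $(\ast)$ --- which you correctly identify as the crux and then explicitly leave unproved --- is precisely the mathematical content of the paper's Lemma \ref{techlem}, and without it the induction does not close. This is a genuine gap, not a routine verification to be deferred.

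The paper's route around your ``diagonal subgroup'' obstacle is worth spelling out, because your framing via $U=X\cap P_I$ and $V=X\cap P_J$ is not the one that works. First, by Lemma \ref{maxex} (using that $Z(P)$ is cyclic and everything in sight has exponent $p$), $\ce(P)$ is exactly the set of \emph{extraspecial} members of $\mzs(P)$, i.e.\ of the maximal elements of the poset $\mz(P)=\{X\leq P: Z<X=\o1(X) \mbox{ and } [X,X]\leq Z\}$; so one never analyzes an arbitrary extraspecial $X$ with $C_P(X)=Z$ directly, only maximal elements of $\mz(P)$. Second, for $T\in\mz(P)$ one uses not $T\cap P_i$ but the possibly larger projections $T^i:=\pi_i(\phi^{-1}(T))$, where $\phi:P_1\times P_2\rightarrow P$ is the multiplication map. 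The identity $[g_1g_2,h_1h_2]=[g_1,h_1][g_2,h_2]$ gives $[T^i,T^i]\leq P_1\cap P_2=Z$, the exponent-$p$ hypothesis gives $T^i=\o1(T^i)$, and hence $T\leq T^1T^2\in\mz(P)$; maximality then forces $T=T^1T^2$ with $T^i\in\mzs(P_i)$. In particular a diagonal extraspecial $X$ is properly contained in $X^1X^2\in\mz(P)$, so it is not maximal and by Lemma \ref{maxex} fails $\o1(C_P(X))=Z(X)$; thus such $X$ never lies in $\ce(P)$, but the proof goes through $\mz$-maximality and the projections rather than through a symplectic analysis of $X/Z$. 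A small further wrinkle: your $(\ast)$ degenerates when one of $\ce(P_I),\ce(P_J)$ is empty (e.g.\ when $P_I$ is cyclic of order $p$, so that $P=P_J$ while $1+I+J=\emptyset$), so the inductive step must be restricted to nonempty $I,J$, the empty set being already a base case.
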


Applying condition (2) of Corollary \ref{main4} repeatedly to the set $\{0,1\}$, we see that, for each $n \in \nn_0$, the set $\{n,n+1,\ldots,2n,2n+1\}$ lies in $\Omega$.  It follows that for each $m \in \nn_0$, there exist some odd prime $p$ and some $p$-group $P$ such that $\de\a2(P)$ has exactly $m$ nontrivial reduced homology groups.  It remains to be seen whether, for each finite subset $I$ of $\nn_0$, there exist a prime $p$ and a $p$-group $P$ such that $\wh_i(\de\a2(P))$ is nontrivial if and only if $i \in I$.  Also of interest is whether the restriction $p>N(I)$ can be removed from Corollary \ref{main4} or from any stronger result of a similar nature.

\noindent
{\bf Acknowledgment:}
During the preparation of this paper, the first author enjoyed the hospitality and the financial support of Washington University in Saint Louis. He thanks this institution warmly.

\section{Definitions, notation and preliminary results} \label{defs}

\subsection{Topology}
The objects and ideas from topology that we will use are well known and each is discussed in at least one of \cite{Hat}, \cite{Bj} and \cite{Wa}.  

For a partially ordered set $\cp$, the {\it order complex} $\de\cp$ is the abstract simplicial complex whose $k$-dimensional faces are all chains $x_0<\cdots <x_k$ of length $k$ from $\cp$.  We make no distinction between an abstract simplicial complex $\de$ and an arbitrary geometric realization of $\de$, as all such realizations have the same homeomorphism type.  

We write $X \simeq Y$ to indicate that topological spaces $X,Y$ have the same homotopy type.

Let $X_1,\ldots,X_k$ be nonempty, pairwise disjoint topological spaces and pick $x_i \in X_i$ for each $i \in [k]:=\{1,\ldots,k\}$.  The {\it wedge} $\bigvee_{i=1}^{k}(X_i,x_i)$ is obtained from the (disjoint) union $\bigcup_{i=1}^{k}X_i$ by identifying all the $x_i$.  That is, $\bigvee_{i=1}^{k}(X_i,x_i)$ is the quotient space $\bigcup_{i=1}^{k}X_i/\sim$, where $\sim$ is the equivalence relation whose elements are $(x_i,x_j)$ for all $1 \leq i<j \leq k$.   

If, for each $i \in [k]$, the connected components of $X_i$ are pairwise homotopy equivalent, then the homotopy type of $\bigvee_{i=1}^{k}(X_i,x_i)$ does not depend on the choice of the $x_i$ and we write $\bigvee_{i=1}^{k}X_i$ for any wedge of the $X_i$.  In particular, if each $X_i$ is a sphere then the {\it wedge of spheres} $\bigvee_{i=1}^{k}X_i$ is well defined.  For each $l \geq 0$, there is an isomorphism of reduced homology groups $$\wh_l(\bigvee_{i=1}^kX_i) \cong \bigoplus_{i=1}^k \wh_l(X_i).$$  In particular, if each $X_i$ is a sphere then the homotopy type of $\bigvee_{i=1}^{k}X_i$ is uniquely determined by its reduced homology, as asserted implicitly in the introduction.

If $\de$ is a simplicial complex and $\Gamma$ is a contractible subcomplex of $\de$ then $\de$ is homotopy equivalent to the quotient space $\de/\Gamma$.  Therefore, if $\de_1,\de_2$ are simplicial complexes such that $\de_1 \cap \de_2$ is contractible then $\de_1 \cup \de_2$ is homotopy equivalent with a wedge $\de_1 \vee \de_2$.  In particular, if $\de_1$ and $\de_2$ are simplicial complexes such that $\de_1$, $\de_2$ and $\de_1 \cap \de_2$ are contractible, then $\de_1 \cup \de_2$ is also contractible.  It follows by induction on $k$ that if $\de_1,\ldots,\de_k$ are simplicial complexes such that $\bigcap_{i \in I}\de_i$ is contractible for each nonempty $I \subseteq [k]$ then $\bigcup_{i=1}^{k}\de_i$ is contractible.  Now we can derive the following lemma, which is key in our proof of Theorem \ref{main1}.  A result that is essentially the same as this lemma was proved earlier by C. Kratzer and J. Th\'evenaz (see \cite[Lemme 2.8]{KrTh}).

\begin{lemma} \label{ulem}
Say $\de_1,\ldots,\de_k$ are simplicial complexes such that $\bigcap_{i \in I}\de_i$ is contractible for each $I \subseteq [k]$ satisfying $|I| \geq 2$.  Then there exist $x_1,\ldots,x_k$ such that $x_i \in \de_i$ for each $i \in [k]$ and $$\bigcup_{i=1}^{k}\de_i \simeq \bigvee_{i=1}^{k}(\de_i,x_i).$$
\end{lemma}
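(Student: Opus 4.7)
The plan is to induct on $k$. The case $k=1$ is vacuous, since the wedge reduces to $\de_1$. The case $k=2$ is essentially the content of the paragraph immediately preceding the lemma: pick any $y \in \de_1 \cap \de_2$ (nonempty because $\de_1 \cap \de_2$ is contractible), contract this common subcomplex inside $\de_1$, $\de_2$, and $\de_1 \cup \de_2$, and observe that the three resulting quotients fit together to give $\de_1 \cup \de_2 \simeq (\de_1, y) \vee (\de_2, y)$.

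For $k \geq 3$, I would set $\Gamma = \bigcup_{i=1}^{k-1}\de_i$. The key preliminary step is to verify that $\Gamma \cap \de_k = \bigcup_{i=1}^{k-1}(\de_i \cap \de_k)$ is itself contractible. For any nonempty $J \subseteq [k-1]$,
$$\bigcap_{i \in J}(\de_i \cap \de_k) = \bigcap_{i \in J \cup \{k\}} \de_i,$$
which is contractible by hypothesis, since $|J \cup \{k\}| \geq 2$. Applying the remark stated just before the lemma (that a finite union of subcomplexes, all of whose nonempty intersections are contractible, is itself contractible) to the family $\{\de_i \cap \de_k\}_{i=1}^{k-1}$ then yields contractibility of $\Gamma \cap \de_k$.

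Having established this, I would pick a common point $y \in \bigcap_{i=1}^k \de_i$, which is nonempty by the hypothesis applied to $I = [k]$. Applying the $k=2$ case to the pair $\Gamma, \de_k$ with basepoint $y \in \Gamma \cap \de_k$ gives $\Gamma \cup \de_k \simeq (\Gamma, y) \vee (\de_k, y)$. The inductive hypothesis, whose proof in fact produces a homotopy equivalence at any prescribed common basepoint in $\bigcap_{i=1}^{k-1} \de_i$, gives $\Gamma \simeq \bigvee_{i=1}^{k-1}(\de_i, y)$. Composing the two equivalences yields $\bigcup_{i=1}^{k}\de_i \simeq \bigvee_{i=1}^{k}(\de_i, y)$, and setting $x_i = y$ for all $i$ completes the argument.

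The main obstacle is the contractibility of $\Gamma \cap \de_k$; once this is in hand, the wedge decomposition follows mechanically from the $k=2$ case and the inductive hypothesis. The bookkeeping for basepoints is not a serious difficulty because the hypothesis with $I = [k]$ supplies a common point lying in every $\de_i$, which one can use uniformly throughout the induction.
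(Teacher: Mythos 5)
Your proof is correct, but it takes a genuinely different route from the paper's. The paper avoids induction entirely: it collapses, all at once, the contractible subcomplex $\Gamma=\bigcup_{1\le i<j\le k}(\de_i\cap\de_j)$ inside $\de=\bigcup_{i=1}^k\de_i$ and the contractible subcomplex $\Gamma_i=\bigcup_{j\ne i}(\de_i\cap\de_j)$ inside each $\de_i$, and then observes that the quotient $\de/\Gamma$ is \emph{literally} the wedge of the quotients $\de_i/\Gamma_i$ at their collapsed $0$-cells. The contractibility of $\Gamma$ and of each $\Gamma_i$ is established by exactly the observation you use for $\bigcup_{i=1}^{k-1}(\de_i\cap\de_k)$: every nonempty intersection of the pieces $\de_i\cap\de_j$ is an intersection of at least two of the $\de_i$, hence contractible by hypothesis, so the remark preceding the lemma applies. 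Your induction replaces this single simultaneous collapse by peeling off one complex at a time. What the paper's version buys is that it sidesteps the basepoint bookkeeping your inductive step requires: substituting $\bigvee_{i=1}^{k-1}(\de_i,y)$ for $\Gamma$ inside the wedge $(\Gamma,y)\vee(\de_k,y)$ needs the inductive equivalence to be a \emph{pointed} homotopy equivalence at $y$, which does hold for well-pointed CW complexes (and you rightly flag that your induction carries a prescribed basepoint), but is an extra layer of care. What your version buys is a slightly sharper explicit conclusion, namely that all the $x_i$ may be taken to be one common point of $\bigcap_{i=1}^k\de_i$; the paper only asserts that $x_i$ may be any point of $\Gamma_i$, though a common point is of course a special case of that choice.
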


\begin{proof}
Set $\de:=\bigcup_{i=1}^{k} \de_i$ and $\Gamma:=\bigcup_{1 \leq i<j\leq k}(\de_i \cap \de_j)$. Then $\Gamma$ is contractible.  Therefore, $\de \simeq \de/\Gamma$.  For each $i \in [k]$, set $\Gamma_i:=\bigcup_{j \neq i}(\de_i \cap \de_j)$.  Then $\Gamma_i$ is contractible.  Therefore, $\de_i \simeq \de_i/\Gamma_i$.  Each $\de_i/\Gamma_i$ can be realized as a CW-complex with one $0$-cell $y_i$ corresponding to the subcomplex $\Gamma_i$ along with one $d$-cell for each $d$-dimensional face in $\de_i \setminus \Gamma_i$.  By construction, $\de/\Gamma=\bigvee_{i=1}^{k}(\de_i/\Gamma_i,y_i)$.  To get the wedge described in the lemma, we can take $x_i$ to be any point in $\Gamma_i$. 
\end{proof}

By convention, the wedge of an empty collection of spaces is a point.

\subsection{Group theory} 
We use standard group theoretic notation as can be found in \cite{As,Gor,Suz}.  All groups are assumed to be finite.  For a group $G$ and $x,y \in G$, we write $[x,y]$ for the commutator $x^{-1}y^{-1}xy$.  For $A,B \subseteq G$, we write $[A,B]$ for the subgroup of $G$ generated by all commutators $[a,b]$ with $a \in A$ and $b \in B$.  For $x,y \in G$, we write $x^y$ for $y^{-1}xy$.  We record here the following well known commutator formulas  

\begin{equation} \label{ce1} 
[uv,w]= [u,w]^v[v,w], 
\end{equation}
\begin{equation} \label{ce2}
[u,vw]=[u,w][u,v]^w
\end{equation}
for all $u,v,w \in G$.

Let $p$ be a prime and let $P$ be a $p$-group.  We say that $P$ has {\it exponent} $p$ if every $g \in P$ satisfies $g^p=1$.  A nontrivial group $P$ is {\it elementary abelian} if $P$ is abelian of exponent $p$, in which case $P$ admits a vector space structure over the field $\ff_p$ of order $p$.  The group $P$ is {\it extraspecial} if its center $Z(P)$ has order $p$ and $P/Z(P)$ is elementary abelian.  An extraspecial $p$-group has order $p^{2n+1}$ for some positive integer $n$ (see for example \cite[(23.10)]{As}).  The subgroup $\Omega_1(P) \leq P$ is, by definition, generated by all elements of order $p$ in $P$.  As is standard, we define the terms of the lower central series of $P$ as $\gamma_1(P)=P$ and $\gamma_n(P)=[P,\gamma_{n-1}(P)]$ for each positive integer $n$.  The {\it nilpotence class} of $P$ is the smallest $n$ such that $\gamma_{n+1}(P)=1$.  Finally, $P$ is a {\it central product} of subgroups $Q,R$ if $P=QR$ and $[Q,R]=1$.  Given groups $Q,R$ with subgroups $S \leq Z(Q),T \leq Z(R)$ and an isomorphism $\phi:S \rightarrow T$, one constructs a central product of $Q$ and $R$ as $(Q \times R)/\{(s^{-1},\phi(s)):s \in S\}$.

We will use the following results, the first of which is well known and appears as \cite[(23.8)]{As} and the second of which is due to P. Hall (see \cite[Section 4]{Ha1}, \cite[Introduction]{Ha2}, both of these papers can be found in \cite{Ha3}).

\begin{lemma} \label{es}
Say $X \leq P$ is extraspecial and $[P,X] \leq Z(X)$.  Then $P=XC_P(X)$
\end{lemma}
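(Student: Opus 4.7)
The plan is to associate to each $g \in P$ a linear functional on the $\ff_p$-vector space $X/Z(X)$ and then use nondegeneracy of the standard commutator form on an extraspecial $p$-group to realize that functional as the one induced by an element of $X$. The element $g$ will then differ from this element of $X$ by something in $C_P(X)$, which is exactly what the statement requires.

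First I would establish $Z(X) \leq Z(P)$. Since $X$ is extraspecial and hence nonabelian, $[X,X]=Z(X)$. The hypothesis $[P,X] \leq Z(X)$ together with $[Z(X),X]=1$ yields $[[P,X],X] = 1$, and the three-subgroups lemma applied to the triple $(X,X,P)$ then forces $[Z(X),P] = [[X,X],P] = 1$.

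Next, for each $g \in P$, I would define $\phi_g : X \to Z(X)$ by $\phi_g(x) = [g,x]$. Using the commutator identity \eqref{ce2}, together with the fact (just established) that $[g,x] \in Z(X) \leq Z(P)$ is centralized by every element of $X$, one checks that $\phi_g$ is a group homomorphism. The containment $Z(X) \leq Z(P)$ also gives $\phi_g|_{Z(X)} \equiv 1$, so $\phi_g$ descends to an $\ff_p$-linear map $\bar\phi_g : X/Z(X) \to Z(X)$.

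The crux of the argument is then the standard fact that for an extraspecial $p$-group $X$ of exponent $p$, the commutator pairing $(\bar x,\bar y) \mapsto [x,y]$ is a nondegenerate alternating $\ff_p$-bilinear form on $X/Z(X)$ with values in $Z(X) \cong \ff_p$. By nondegeneracy, every linear functional $X/Z(X) \to Z(X)$ has the form $\bar y \mapsto [x,y]$ for some $x \in X$. Applied to $\bar\phi_g$ this produces $x \in X$ with $[g,y]=[x,y]$ for all $y \in X$; a short computation with \eqref{ce1}, exploiting that $[g,y] \in Z(P)$ is fixed under conjugation by $x^{-1}$, then gives $[gx^{-1},y]=1$ for every $y \in X$. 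Hence $gx^{-1} \in C_P(X)$ and $g \in X\,C_P(X)$, proving $P = X C_P(X)$.

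I do not expect a serious obstacle: once $Z(X) \leq Z(P)$ is in hand, the remainder is direct commutator calculus combined with the well-known symplectic structure on $X/Z(X)$.
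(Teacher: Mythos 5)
Your proof is correct. Note that the paper does not prove this lemma at all --- it is quoted as a known fact with a citation to \cite[(23.8)]{As} --- so there is no in-paper argument to compare against; your argument is essentially the standard proof of that cited result (each $g\in P$ induces an automorphism of $X$ acting trivially on $X/Z(X)$ and on $Z(X)$, and all such automorphisms of an extraspecial group are inner). All the individual steps check out: the three-subgroups lemma application is valid (and in fact $Z(X)\leq Z(P)$ also falls out for free once you know $\phi_g$ is a homomorphism into an abelian group, since then $\phi_g$ kills $[X,X]=Z(X)$); the homomorphism property of $\phi_g$ needs only that $[g,x]\in Z(X)$ is centralized by $X$; and the final computation $[gx^{-1},y]=[g,y][x,y]^{-1}=1$ is right. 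One small imprecision: you invoke nondegeneracy of the commutator form ``for an extraspecial $p$-group of exponent $p$,'' but the lemma as stated carries no exponent hypothesis. Fortunately the exponent plays no role: for any extraspecial $X$, an element of the radical of the form centralizes $X$ and hence lies in $Z(X)$, so the form on $X/Z(X)$ is nondegenerate and the surjectivity onto the dual space follows by counting dimensions. With that remark your argument proves the lemma in the generality in which it is stated.
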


\begin{thm}[P. Hall] \label{nc}
Let $p$ be a prime and let $P$ be a $p$-group.  If the nilpotence class of $P$ is less than $p$ and $P=\Omega_1(P)$, then $P$ has exponent $p$.  
\end{thm}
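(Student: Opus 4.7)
The plan is to deduce the theorem via Hall's theory of regular $p$-groups. The key tool is the \emph{Hall--Petrescu collection formula}: for any elements $x, y$ in any group $G$ and any positive integer $n$,
\begin{equation*}
(xy)^n = x^n y^n c_2^{\binom{n}{2}} c_3^{\binom{n}{3}} \cdots c_n^{\binom{n}{n}},
\end{equation*}
where $c_i \in \gamma_i(\langle x,y\rangle)$ for each $i$. This identity is proved by induction on $n$ via repeated applications of the commutator expansions (\ref{ce1}) and (\ref{ce2}).

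Specialize to $n = p$. Two features cut the formula down. First, $p$ divides $\binom{p}{i}$ for each $1 \leq i \leq p-1$, so every factor $c_i^{\binom{p}{i}}$ (aside from the extremal $c_p$) is a $p$-th power of an element of $\gamma_i(\langle x,y\rangle)$. Second, since $P$ has class less than $p$, the term $c_p$ lies in $\gamma_p(P) = 1$ and disappears. Hence, whenever $x, y \in P$ satisfy $x^p = y^p = 1$,
\begin{equation*}
(xy)^p = d_2^p d_3^p \cdots d_{p-1}^p,
\end{equation*}
with $d_i \in \gamma_i(\langle x, y\rangle) \leq \gamma_2(P)$. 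Thus $(xy)^p$ is always a product of $p$-th powers of commutators.

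The proof then proceeds by induction on $|P|$. Given $x, y \in P$ of order $p$, set $H = \langle x, y \rangle$. If $H$ is proper in $P$, the inductive hypothesis applied to $H$ (which satisfies $H = \Omega_1(H)$ and inherits the class bound) yields $(xy)^p = 1$. The remaining case is $P = \langle x, y\rangle$, and this is the main obstacle: one must show each $d_i^p$ above is trivial, but the $d_i$ live in $\gamma_2(P)$, a subgroup of smaller class that need not be generated by elements of order $p$, so a naive induction does not close. Hall's resolution is to prove the stronger conclusion that every $p$-group of class less than $p$ is \emph{regular} --- that is, for all $x, y \in P$ one has $x^p y^p = (xy)^p z^p$ for some $z \in [\langle x,y\rangle, \langle x,y\rangle]$ --- by carefully iterating the Hall--Petrescu expansion and exploiting the class bound to control the residual terms. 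Once regularity is established, the statement that $\{g \in P : g^p = 1\}$ is a subgroup follows quickly, and the hypothesis $P = \Omega_1(P)$ then forces $P$ to have exponent $p$.
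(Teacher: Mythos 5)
The paper does not actually prove this statement; it is quoted as a classical theorem of P.~Hall, with a citation to his 1934 and 1935 papers on regular $p$-groups, so there is no internal proof to compare against. Your sketch correctly identifies Hall's own route: the Hall--Petrescu collection formula, the observation that $p \mid \binom{p}{i}$ for $1 \le i \le p-1$ together with $\gamma_p(P)=1$ reduces $(xy)^p$ to $x^p y^p$ times $p$-th powers of elements of $\gamma_2(\langle x,y\rangle)$, and the reduction of the general case to regularity. The steps you do carry out are correct, including the induction on $|P|$ in the case $\langle x,y\rangle < P$ (noting that $\langle x,y\rangle$ is generated by elements of order $p$ and inherits the class bound) and your diagnosis of why the naive induction fails when $P=\langle x,y\rangle$.

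However, as a self-contained proof the proposal has a genuine gap, and it is precisely the step you flag and then defer: the assertion that every $p$-group of class less than $p$ is regular is essentially the entire content of the theorem, and ``carefully iterating the Hall--Petrescu expansion and exploiting the class bound'' is a description of a nontrivial induction, not a proof of it. A second, smaller elision: even granting regularity, the claim that $\{g \in P : g^p = 1\}$ is a subgroup does not follow immediately from the defining relation $x^p y^p = (xy)^p z^p$ with $z \in \gamma_2(\langle x,y\rangle)'$ --- that relation only shows $(xy)^p \in \mho_1(\gamma_2(\langle x,y\rangle))$ when $x^p=y^p=1$, and killing that residual subgroup requires another induction on $|P|$ using the fact that subgroups of regular groups are regular. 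Both steps are standard and appear in Hall's paper (and in Huppert or Suzuki), so the outline is a faithful roadmap to the literature; but if the goal was a proof rather than a citation, the two deferred lemmas are where all the work lives.
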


\section{The proof of Theorem \ref{main1}} \label{pr1}

Throughout this section, $p$ is an odd prime and $P$ is a $p$-group.  
%We prove that $P$ is not a counterexample to Theorem \ref{main1}.

\subsection{Some useful elementary abelian sections}

Let $Z \leq \Omega_1(Z(P))$.  (While we begin our discussion  with no other condition on $Z$, soon we will turn to the case where $|Z|=p$.) Define $$\mz(P):=\{X \leq P:Z<X=\Omega_1(X) \mbox{ and } [X,X] \leq Z\}.$$ Note that if $Z \neq \Omega_1(P)$ then $\mz(P)$ is nonempty, since $\langle Z,x \rangle \in \mz(P)$ for each $x$ of order $p$ in $P \setminus Z$.  We consider $\mz(P)$ to be partially ordered by inclusion and define $$\mzs(P):=\{X \in \mz(P):X \mbox{ is maximal in $\mz(P)$}\}.$$

Each $X \in \mz(P)$ has nilpotence class at most two.  By Lemma \ref{nc}, each $X \in \mz(P)$ has exponent $p$.

The following technical lemma will be used both in the proof of Theorem \ref{main1} and in the construction of examples proving Corollaries \ref{main3} and \ref{main4} in Section \ref{pr3}.

\begin{lemma} \label{techlem}
Assume $P$ is a central product of proper subgroups $P_1,P_2$ such that at least one $P_i$ has exponent $p$.
% such that \begin{itemize} \item each $P_i$ strictly contains $P_1 \cap P_2$, and 
%\item each $P_i=\Omega_1(P_i)$, and 
%\item at least one $P_i$ has exponent $p$. \end{itemize}  
Set $Z=P_1 \cap P_2$.  Then $$\mzs(P)=\{X_1X_2:X_i \in \mzs(P_i) \mbox{ for } i=1,2\}.$$
\end{lemma}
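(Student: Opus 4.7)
The plan is to exploit the internal direct product decomposition $P/Z = (P_1/Z) \times (P_2/Z)$, which holds because $[P_1,P_2]=1$ and $P_1 \cap P_2 = Z$, together with the observation (already recorded in the text) that every member of $\mz(P)$ has exponent $p$. I prove the two inclusions of the claimed equality separately.

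For the inclusion $\supseteq$, given $X_i \in \mzs(P_i)$, I verify $X_1X_2 \in \mz(P)$ directly: $Z < X_1 \leq X_1X_2$; the identity $[X_1X_2,X_1X_2]=[X_1,X_1][X_2,X_2] \leq Z$ follows from $[X_1,X_2]=1$; and since each $X_i$ has exponent $p$ and elements across the factors commute, $(x_1x_2)^p = x_1^p x_2^p = 1$, giving $X_1X_2 = \o1(X_1X_2)$. Maximality of $X_1X_2$ I defer until after the reverse inclusion.

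For the inclusion $\subseteq$, fix $Y \in \mzs(P)$ and set $A := \{a \in P_1 : ab \in Y \text{ for some } b \in P_2\}$, with $B \leq P_2$ defined symmetrically; both contain $Z$ and $Y \leq AB$. For $a_ib_i \in Y$, the identity $[a_1b_1,a_2b_2] = [a_1,a_2][b_1,b_2]$ (valid since $[P_1,P_2]=1$) lies in $[Y,Y] \leq Z$; separating the $P_1$ and $P_2$ components through $P_1 \cap P_2 = Z$ forces $[a_1,a_2],[b_1,b_2] \in Z$, so $[A,A],[B,B] \leq Z$. The hypothesis that some $P_i$ has exponent $p$ then yields the exponent $p$ condition for both $A$ and $B$: if say $P_2$ has exponent $p$, then $b^p=1$ combined with $(ab)^p=a^pb^p=1$ gives $a^p=1$, so $A$ has exponent $p$ ($B$ does trivially), and the other case is symmetric. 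Hence $A=\o1(A)$ and $A \in \mz(P_1)$, and similarly $B \in \mz(P_2)$ (the degenerate possibility $A=Z$, which would force $Y \leq ZP_2=P_2$, is excluded by adjoining to $Y$ any order-$p$ element of $P_1 \setminus Z$, which commutes with $Y$ and contradicts maximality). By the easy direction, $AB \in \mz(P)$, and maximality of $Y$ gives $Y = AB$.

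To finish I must upgrade $A,B$ to maximal elements and handle the easy-direction maximality. Extend $A$ to some $X_1 \in \mzs(P_1)$; then $X_1B \in \mz(P)$ contains $Y = AB$, so $Y = X_1B$. Because $X_1,A \leq P_1$ force $X_1 \cap P_2 = A \cap P_2 = Z$, a direct-product computation in $P/Z$ yields $(X_1B) \cap P_1 = X_1$ and $(AB) \cap P_1 = A$, whence $X_1 = A$; so $A$ was already maximal, and symmetrically $B \in \mzs(P_2)$. The same intersection-with-$P_1$ argument, applied to any $Y \in \mzs(P)$ containing a product $X_1X_2$ with $X_i \in \mzs(P_i)$, shows $X_1 \leq A$ and $X_2 \leq B$; maximality of the $X_i$ then forces $Y = X_1X_2$. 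The principal technical point is the exponent reduction for $A$ (and $B$), which is where the hypothesis that at least one $P_i$ has exponent $p$ is essential: without it, an element $a \in A$ could have order $p^2$ with $a^p$ a nontrivial element of $Z$, leaving $A$ outside $\mz(P_1)$.
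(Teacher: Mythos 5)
Your proof is correct and takes essentially the same route as the paper's: your subgroups $A,B$ are precisely the paper's projections $T^1,T^2$ of $\phi^{-1}(T)$ onto the factors of $P_1\times P_2$, and the key steps --- the identity $[a_1b_1,a_2b_2]=[a_1,a_2][b_1,b_2]$, forcing commutators into $P_1\cap P_2=Z$, and reducing the exponent-$p$ condition to the hypothesis that one factor has exponent $p$ --- all coincide. If anything you treat the degenerate case $A=Z$ more explicitly than the paper does (whose claim permits $T^i=Z$ but whose deduction that maximal elements are products $X_1X_2$ then silently needs $\Omega_1(P_i)>Z$, exactly the condition your fix also requires and which holds in every application of the lemma in the paper).
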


\begin{proof}
We show first that if $X_i \in \mz(P_i)$ for $i=1,2$ then $X:=X_1X_2 \in \mz(P)$.  It follows from commutator identities (\ref{ce1}) and (\ref{ce2}) that if $g_i,h_i \in P_i$ for $i=1,2$ then \begin{equation} \label{comid} [g_1g_2,h_1h_2]=[g_1,h_1][g_2,h_2]. \end{equation}  Therefore, $$[X,X]=[X_1,X_1][X_2,X_2] \leq Z.$$  Also, $X=\Omega_1(X)$, since $X_i=\Omega_1(X_i)$ for $i=1,2$.  Note also that each $X_i$ lies in $\mz(P)$.

Next let $T \in \mz(P)$.  There are surjective homomorphisms \begin{itemize} \item $\phi:P_1 \times P_2 \rightarrow P$, $\phi((p_1,p_2))=p_1p_2$, \item $\pi_1:P_1 \times P_2 \rightarrow P_1$, $\pi_1((p_1,p_2))=p_1$, and \item $\pi_2:P_1 \times P_2 \rightarrow P_2$, $\pi_2((p_1,p_2))=p_2$. \end{itemize}  For $i=1,2$, set $$T^i:=\pi_i(\phi^{-1}(T)).$$
We claim that, for $i=1,2$, either  $T^i \in \mz(P_i)$ or $T^i=Z$.  The truth of this claim implies the truth of the Lemma.  Indeed, assuming the claim, $T \leq T^1T^2 \in \mz(P)$. It follows that every element of $\mzs(P)$ is of the form $X_1X_2$ with $X_i \in \mzs(P_i)$.  On the other hand, say $X=X_1X_2$ with each $X_i \in \mzs(P_i)$.  If $X \leq T \in \mz(P)$,  then $X_i \leq T^i \in \mz(P_i)$ for $i=1,2$.  Therefore, each $T^i=X_i$  and $T=X$.

Now we prove our claim.  We assume without loss of generality that $i=1$.  Note that $Z  \leq T^1$, since $(z,1) \in \phi^{-1}(Z)$ for each $z \in Z$.  Let $g_1,h_1 \in T^1$.  There exist $g_2,h_2 \in P_2$ such that $g_1g_2$ and $h_1h_2$ are in $T$.   Now $$[g_1g_2,h_1h_2] \in Z \leq P_2,$$ since $[T,T] \leq Z$.  It follows from (\ref{comid}) that $[g_1,h_1] \in P_2$, since both $[g_1g_2,h_1h_2]$ and $[g_2,h_2]$ are in $P_2$.  Therefore, $[g_1,h_1] \in P_1 \cap P_2=Z$ and $[T^1,T^1] \leq Z$.

It remains to show that $T^1=\Omega_1(T^1)$.  Assume for contradiction that $T^1$ has an element $g_1$ of order $p^2$.  There is some $g_2 \in T^2$ such that $g_1g_2 \in T$.  By Lemma \ref{nc}, $(g_1g_2)^p=1$.  As $P_1$ does not have exponent $p$, it must be the case that $g_2^p=1$.  It follows now from $[P_1,P_2]=1$ that $$1=(g_1g_2)^p=g_1^pg_2^p=g_1^p,$$  which gives the desired contradiction and completes our proof.
\end{proof}

\subsection{Proof of Theorem \ref{main1}}

Let us recall a basic assumption under which we are working.

\begin{hyp} \label{bashyp}
The $p$-group $P$ (of odd order) is not cyclic.
\end{hyp}

Our goal is to show that $P$ is not a counterexample to Theorem \ref{main1}.

Assume first that the center $Z(P)$ is not cyclic.  As shown in \cite{BoTh} by applying the Quillen fiber lemma to the map $A \mapsto A\Omega_1(Z(P))$ on $\a2(P)$, $\de\a2(P)$ is contractible.  (This is a key technique in the study of subgroup complexes that is used first, to our knowledge, in \cite{Qu} and described also in \cite[Definition 3.3.1 and Lemma 3.3.3]{Sm}.)  Let $X \leq P$ be extraspecial.  Then $\Omega_1(C_P(X)) \neq Z(X)$, since $Z(X)$ is cyclic.  So, Theorem \ref{main1} holds when $Z(P)$ is not cyclic.  We proceed under the following assumption.

\begin{hyp} \label{assume}
$Z(P)$ is cyclic.
\end{hyp}

Let $Z=\Omega_1(Z(P))$.  Then $Z$ is cyclic of order $p$.  We define $$\aa(P):=\{H \in \a2(P):Z<H\}.$$ As shown in \cite{BoTh} by applying the Quillen fiber lemma to the map $A \mapsto AZ$ on $\a2(P)$,  \begin{equation} \label{a2az} \de\aa(P) \simeq \de\a2(P). \end{equation}  (Again, this is a basic and key technique that is used in \cite{Qu} and appears in \cite[Proposition 3.1.12(2)]{Sm}.)    So, we work from now on with $\aa(P)$ in place of $\a2(P)$.

\begin{lemma} \label{cont}
Let $X \in \mz(P)$.  If $X$ is not extraspecial, then $\de\aa(X)$ is contractible.
\end{lemma}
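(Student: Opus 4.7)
The plan is to split into two cases depending on whether $X$ is abelian, and in each case to exhibit a subgroup of $X$ that serves as a cone point, either directly as the maximum of $\aa(X)$ or after deformation-retracting along a closure operator of the same Quillen-fiber flavour already used twice in the paragraphs preceding the lemma. In the easy case where $X$ is abelian, the fact that $X = \Omega_1(X)$ forces $X$ to have exponent $p$ by Theorem \ref{nc}, so $X$ is elementary abelian; combined with $Z < X$ from the definition of $\mz(P)$, this puts $X$ in $\aa(X)$ as its maximum element, so $\de\aa(X)$ is a cone and therefore contractible.

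For the nonabelian case I first unpack what ``not extraspecial'' means. Because $[X,X]$ is a nontrivial subgroup of $Z$ and $|Z|=p$, necessarily $[X,X]=Z$. So $X$ being extraspecial amounts to $Z(X)=Z$, and failure of extraspeciality gives $Z < Z(X)$. Since $X$ has exponent $p$, $Z(X)$ is elementary abelian of order at least $p^2$, and since $Z < Z(X) \leq X$, we have $Z(X) \in \aa(X)$.

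Next I would define $f : \aa(X) \to \aa(X)$ by $f(A) = A \cdot Z(X)$. This is well defined because $A$ and $Z(X)$ commute elementwise, so $f(A)$ is abelian, has exponent $p$ (both factors do), and strictly contains $Z$. The map $f$ is a monotone closure operator ($f(A) \geq A$ and $f \circ f = f$), so by the standard Quillen fiber argument (applied to $f$ viewed as a poset map with contractible fibers, exactly as in the paragraphs establishing (\ref{a2az}) and the contractibility of $\de\a2(P)$ when $Z(P)$ is noncyclic), $\de\aa(X)$ is homotopy equivalent to the order complex of the image $\{B \in \aa(X) : B \geq Z(X)\}$. This sub-poset has $Z(X)$ as its minimum, so its order complex is a cone and contractible. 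I do not foresee any real obstacle: the only judgement call is choosing $Z(X)$ as the target of the closure, and the only nontrivial verification is the strict inclusion $Z < Z(X)$ in the nonabelian case, which is precisely the content of the non-extraspecial hypothesis.
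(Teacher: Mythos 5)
Your proof is correct and follows essentially the same route as the paper: observe that failure of extraspeciality is equivalent to $Z < Z(X)$, then apply the Quillen fiber / closure-operator argument to $A \mapsto AZ(X)$ to cone off $\de\aa(X)$ at $Z(X)$. The paper does not even need your abelian/nonabelian case split, since the same map works uniformly (in the abelian case $Z(X)=X$), but that is a cosmetic difference only.
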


\begin{proof}
Note first that $X$ is extraspecial if and only if $Z(X)=Z$.  If $Z(X) \neq Z$, then $\Omega_1(Z(X) \in \a2(X)$.  Now we get that $\de\aa(X)$ is contractible by applying the Quillen fiber lemma to the map $A \mapsto AZ(X)$ on $\aa(X)$, as discussed above.
\end{proof}

Note that it follows from Proposition \ref{extra} and Lemma \ref{cont} that the wedge $\bigvee_{X \in \mzs(P)}\de\aa(X)$ is well defined.

\begin{lemma} \label{maxex}
Assume $X \in \mz(P)$ is extraspecial.  Then $X \in \mzs(P)$ if and only if $\Omega_1(C_P(X))=Z$.
\end{lemma}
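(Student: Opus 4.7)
The plan is to prove both implications by using that any extraspecial $X \in \mz(P)$ must satisfy $Z(X)=Z$. Indeed, $Z \le Z(P) \le C_P(X)$ and $Z \le X$ together give $Z \le Z(X)$; since $X$ is extraspecial, $|Z(X)|=p=|Z|$, so $Z=Z(X)$. This identification is what lets Lemma \ref{es} interact cleanly with the hypothesis on $\Omega_1(C_P(X))$.

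For the forward implication, I would argue by contraposition. Assume $\Omega_1(C_P(X)) \neq Z$, and pick an element $y$ of order $p$ in $C_P(X) \setminus Z$. Set $Y=\langle X,y\rangle = X\langle y\rangle$. Since $y \in C_P(X)$ and $y \notin Z=Z(X)$, we cannot have $y \in X$ (otherwise $y$ would lie in $Z(X)=Z$), so $\langle y\rangle \cap X = 1$ and $Y \cong X \times \langle y\rangle$. Then $Y$ has exponent $p$, $Y=\Omega_1(Y)$, and $[Y,Y]=[X,X]\le Z$, so $Y \in \mz(P)$; since $Y>X$, this contradicts $X \in \mzs(P)$.

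For the reverse implication, assume $\Omega_1(C_P(X))=Z$ and let $Y \in \mz(P)$ with $X \le Y$; the goal is $X=Y$. Since $[Y,Y]\le Z=Z(X)$ and $X\le Y$, we have $[Y,X]\le Z(X)$. Now Lemma \ref{es} (applied to the extraspecial subgroup $X$ of the ambient group $Y$) yields $Y=X\,C_Y(X)$. But $C_Y(X)\le C_P(X)$, and $C_Y(X)\le Y$ has exponent $p$ (recall every element of $\mz(P)$ has exponent $p$ by Hall's Theorem \ref{nc}), so
\[
C_Y(X) \le \Omega_1(C_P(X)) = Z \le X,
\]
which gives $Y=X$. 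Hence $X$ is maximal in $\mz(P)$.

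The only delicate point I anticipate is bookkeeping around $Z$ versus $Z(X)$ and ensuring the augmented subgroup $Y=X\langle y\rangle$ in the forward direction really lies in $\mz(P)$; once one observes $\langle y\rangle\cap X=1$ (which is forced by $y \notin Z(X)$), both directions reduce to direct applications of Lemma \ref{es} and the exponent-$p$ consequence of Theorem \ref{nc}.
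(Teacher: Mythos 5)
Your proof is correct and follows essentially the same route as the paper: the forward direction adjoins an order-$p$ element of $C_P(X)\setminus Z$ to produce a strictly larger member of $\mz(P)$, and the reverse direction applies Lemma \ref{es} to write the overgroup as $XC_Y(X)$ and uses the exponent-$p$ property from Theorem \ref{nc} to force $C_Y(X)\leq\Omega_1(C_P(X))=Z$. The paper phrases the second implication contrapositively, but the substance is identical.
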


\begin{proof}
Say $\Omega_1(C_P(X)) \neq  Z$.  Pick $Y<\Omega_1(C_P(X))$ such that $|Y|=p$ and $Y \neq Z$.  Then $X<XY \in \mz(P)$, since $XY \cong X \times Y$.  Therefore, $X \not\in \mzs(P)$.  Conversely, say $X \not\in \mzs(P)$.  There is some $T$ such that $X<T \in \mz(P)$.  Then $X \lhd T$ and $[T,X] \leq Z=Z(X)$, since $[T,T]=Z<X$.  By Lemma \ref{es}, $T=XC_T(X)$.  There is some $g \in C_T(X) \setminus Z$ with $|g|=p$, since $T$ has exponent $p$ and $T \neq X$.   Now $g \in \Omega_1(C_P(X)) \setminus Z$.
\end{proof}

Each $A \in \aa(P)$ lies in $\mz(P)$.  The argument used to obtain (\ref{a2az}) above applies to $X \in \mzs(P)$ as well as to $P$.  Therefore, $$\de\aa(P)=\bigcup_{X \in \mzs(P)}\de\aa(X).$$  Moreover, if an extraspecial $p$-group $X$ does not have exponent $p$, then $X \neq \Omega_1(X)$.  It follows now from Lemma \ref{maxex} that 
$\ce(P)$ consists of those members of $\mzs(P)$ that are extraspecial.  With Lemmas \ref{cont} amd \ref{maxex} in hand, we can invoke Lemma \ref{ulem} to prove Theorem \ref{main1} once we show that the intersection of two or more members of $\mzs(P)$ lies in $\mz(P)$ and is not extraspecial. Lemma \ref{mzsint} below says that this condition on intersections does indeed hold and thus completes the proof of Theorem \ref{main1}.
%Now Theorem \ref{main1} will follow from Lemmas \ref{ulem}, \ref{cont} and \ref{maxex} along with Lemma \ref{mzsint} below.

\begin{lemma} \label{nea}
$P$ has a normal elementary abelian subgroup $N$ of order $p^2$.  Each $S \in \mzs(P)$ contains $N$.
\end{lemma}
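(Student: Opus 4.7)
The plan is to establish the existence of $N$ first, and then to show $N$ lies in every $S \in \mzs(P)$. A basic fact used throughout is that $Z$ is the unique normal subgroup of $P$ of order $p$: any such subgroup is centralized by $P$ (since $\mathrm{Aut}(\zz/p) \cong \zz/(p-1)$ has order coprime to $p$), hence lies in the cyclic $Z(P)$, whose unique order-$p$ subgroup is $Z$.

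For existence, I would argue by induction on $|P|$, proving the slightly stronger assertion that every non-cyclic $p$-group (with $p$ odd) contains a normal elementary abelian subgroup of order $p^2$. The base $|P|=p^2$ is immediate. For the inductive step, if $Z(P)$ is non-cyclic then $\Omega_1(Z(P))$ has order at least $p^2$ and contains the desired subgroup; otherwise set $Z_0 := \Omega_1(Z(P))$, of order $p$, so that $P/Z_0$ is non-cyclic (else $P$ would be abelian and hence cyclic) and the inductive hypothesis yields a normal elementary abelian $M/Z_0 \leq P/Z_0$ of order $p^2$, with preimage $M$ normal in $P$ of order $p^3$ containing $Z_0$. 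I then analyze $M$: if $M$ is abelian, $\Omega_1(M)$ is elementary abelian of order at least $p^2$ and characteristic in $M$, and either equals $N$ or supplies $N$ via a flag argument on the $\ff_p[P]$-module $\Omega_1(M)/Z_0$; if $M$ is non-abelian then $M$ is extraspecial of order $p^3$ with $Z(M)=[M,M]=Z_0$, in which case the exponent-$p$ subcase is handled by a flag on $M/Z_0$ producing a $P$-invariant line $L/Z_0$ (with $L$ automatically elementary abelian of order $p^2$, since $L \subseteq M$ has exponent $p$), and the exponent-$p^2$ subcase is handled by $\Omega_1(M)$, which is itself elementary abelian of order $p^2$. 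The inclusion $Z \leq N$ holds in every case, either by construction ($Z = Z_0 \subseteq N$) or because $N \cap Z(P)$ is a nontrivial $P$-fixed subspace of $N$ and thus contains $Z$.

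For the containment claim, fix $S \in \mzs(P)$ and consider the subgroup $NS$ (which is a subgroup because $N$ is normal in $P$). The key observation is that $P$ acts on the $1$-dimensional $\ff_p$-space $N/Z$ through a $p$-subgroup of $\mathrm{GL}_1(\ff_p) \cong \zz/(p-1)$, which is trivial; hence $[P, N] \leq Z$. Combining this with $[N,N]=1$, $[S,S] \leq Z$, the centrality of $Z$, and the identities (\ref{ce1}), (\ref{ce2}), one computes that $[n_1 s_1, n_2 s_2] = [n_1, s_2][s_1, s_2][s_1, n_2] \in Z$ for all $n_i \in N$, $s_i \in S$, giving $[NS, NS] \leq Z$. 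Moreover, $N$ is elementary abelian and $S=\Omega_1(S)$, so $N \cup S \subseteq \Omega_1(NS)$ and therefore $NS = \langle N, S\rangle = \Omega_1(NS)$. Together with $NS \supseteq S > Z$, this shows $NS \in \mz(P)$, and the maximality of $S$ forces $NS = S$, i.e., $N \leq S$. The main obstacle is the inductive existence step, particularly the case analysis for $M$ of order $p^3$, which relies on the explicit structure of the two isomorphism types of non-abelian groups of order $p^3$ for $p$ odd.
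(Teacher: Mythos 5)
Your proof of the containment claim is essentially the paper's: establish $[P,N]\le Z$ (the paper gets $[P,N]=Z$ from $1<[P,N]<N$ and $[P,N]\cap Z(P)\neq 1$, while you use triviality of the $P$-action on the order-$p$ quotient $N/Z$ --- both are valid), deduce $NS\in\mz(P)$ via the commutator identities and $\Omega_1$-closure, and invoke maximality of $S$. The only divergence is that the paper simply cites \cite[Lemma 10.11]{GLS} for the existence of $N$, whereas you sketch a correct self-contained inductive proof of that standard fact; this is harmless but unnecessary.
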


\begin{proof}
The existence of $N$ is \cite[Lemma 10.11]{GLS}.   Now $[P,N] \lhd P$.   Moreover, $1<[P,N]<N$, since $Z(P)$ is cyclic and $N$ is not and $P$ is nilpotent.  Therefore, $[P,N]=Z$, since $[P,N] \cap Z(P) \neq 1$.  Let $S \in \mzs(P)$.  Then $[S,N] \leq Z$.  It follows from (\ref{ce1}),(\ref{ce2}) that $[SN,SN] \leq Z$.  Therefore, $SN/Z$ is abelian.  Moreover, $SN=\Omega_1(SN)$, since $S=\Omega_1(S)$ and $N=\Omega_1(N)$.  It follows that $SN \in \mz(P)$.  Therefore, $N \leq S$, since $S \in \mzs(P)$.
\end{proof}

\begin{lemma} \label{mzsint}
Let $\cs \subseteq \mzs(P)$ with $|\cs| \geq 2$.  Then $\bigcap_{S \in \cs}S$ lies in $\mz(P)$ and is not extraspecial.
\end{lemma}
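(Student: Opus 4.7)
My plan is to establish the two assertions in turn: first that $T:=\bigcap_{S\in\cs}S$ lies in $\mz(P)$, and then that $T$ is not extraspecial. For the former, Lemma~\ref{nea} gives $N\leq S$ for every $S\in\cs$, whence $N\leq T$ and $Z<T$ (since $|N|=p^2>|Z|$). Any $S\in\cs$ has exponent $p$, so $T\leq S$ inherits $T=\Omega_1(T)$ and $[T,T]\leq[S,S]\leq Z$. This already gives $T\in\mz(P)$.

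For the second assertion, I would argue by contradiction. Suppose $T$ is extraspecial, so $Z(T)=Z$ and $|T|\geq p^3$. Since $|\cs|\geq 2$ and the elements of $\cs$ are maximal in $\mz(P)$, necessarily $T\lneq S$ for each $S\in\cs$ (else $T=S$ would force all of $\cs$ to collapse by maximality). Lemma~\ref{es}, applied inside $S$ using $[S,T]\leq[S,S]\leq Z=Z(T)$, yields $S=T\cdot V_S$ where $V_S:=C_S(T)$ satisfies $V_S\cap T=Z$ and $V_S>Z$. Applying the same lemma to the join $G:=\langle S:S\in\cs\rangle$ (and noting $[G,T]\leq Z=Z(T)$), one gets $G=T\cdot C_G(T)$, and a short verification shows $C_G(T)=\langle V_S:S\in\cs\rangle=:C$. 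The target is $G\in\mz(P)$; by the maximality of each $S\in\cs$, this forces $G=S$ for every $S\in\cs$, making $\cs$ a singleton and contradicting $|\cs|\geq 2$.

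Using $[T,C]=1$, we compute $[G,G]=[T,T]\cdot[C,C]\leq Z\cdot[C,C]$, so $G\in\mz(P)$ reduces to $[C,C]\leq Z$: once this holds, $G$ has nilpotence class $\leq 2<p$ and is generated by subgroups of exponent $p$, so Theorem~\ref{nc} (Hall) gives that $G$ has exponent $p$ and hence $G=\Omega_1(G)$. The within-$V_S$ commutators are controlled by $V_S\leq S\in\mz(P)$, leaving the cross-commutator $[V_{S_1},V_{S_2}]\leq Z$ for distinct $S_1,S_2\in\cs$ as the main obstacle. My plan here is to pick $v_i\in V_{S_i}\setminus Z$ of order $p$, examine the subgroup $T\langle v_1,v_2\rangle$ (a central product of $T$ with $\langle v_1,v_2\rangle$ over $Z$, since $T\cap C_P(T)=Z(T)=Z$), exploit that each $T\langle v_i\rangle=T\times\langle v_i\rangle$ already lies in $\mz(P)$, and use the non-degenerate alternating form on $T/Z$ afforded by $T$ being extraspecial to force $[v_1,v_2]\in Z$. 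This commutator-tracking step is the crux of the argument and is where the extraspecial hypothesis is genuinely used.
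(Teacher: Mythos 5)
Your first paragraph (membership of $T=\bigcap_{S\in\cs}S$ in $\mz(P)$) is correct and is essentially the paper's argument, and the setup of your contradiction --- writing $S=T\cdot V_S$ via Lemma \ref{es} and forming the join $G=\langle S:S\in\cs\rangle=T\cdot C_G(T)$ --- parallels the paper's $Q=YC_Q(Y)$. But the argument stalls at exactly the step you flag as the crux: you never prove $[V_{S_1},V_{S_2}]\leq Z$, and the mechanism you propose cannot prove it. The nondegenerate alternating form on $T/Z$ encodes commutators of elements of $T$ with one another; since $v_1$ and $v_2$ centralize $T$, they are invisible to that form, which therefore carries no information about $[v_1,v_2]$. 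Likewise, the fact that each $T\langle v_i\rangle\cong T\times\langle v_i\rangle$ lies in $\mz(P)$ constrains how each $v_i$ interacts with $T$, not how $v_1$ and $v_2$ interact with each other: they lie in different maximal members of $\mz(P)$, and nothing you have written bounds their commutator. So the reduction of ``$G\in\mz(P)$'' to ``$[C,C]\leq Z$'' is fine, but the latter claim is left unestablished, and I see no direct route to it from your hypotheses.

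The paper avoids this obstacle entirely: rather than trying to place the whole join in $\mz(P)$, it locates inside $C_Q(Y)$ a single elementary abelian subgroup $N$ of order $p^2$ that is forced into every $S\in\cs$. It first shows $C_Q(Y)=\Omega_1(C_Q(Y))$ is noncyclic, then splits into cases. If $Z(C_Q(Y))$ is noncyclic, take $N=\Omega_1(Z(C_Q(Y)))\leq Z(Q)$; then $SN\in\mz(P)$ and maximality of $S$ gives $N\leq S$. If $Z(C_Q(Y))$ is cyclic, apply Lemma \ref{nea} to $C_Q(Y)$ and Lemma \ref{techlem} to the central product $Q=Y\cdot C_Q(Y)$ to see that every $S\in\cs$ contains $N$. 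Either way one gets the contradiction $Y<YN\leq\bigcap_{S\in\cs}S=Y$. To complete your write-up you would need to replace the form-based step with an argument of this kind (or some other genuinely new input); as it stands the proposal has a gap at its central step.
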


\begin{proof} Let $Y= \bigcap_{S \in \cs}S$.  Then $Z \leq Y$ and, for any $S \in \cs$, $[Y,Y] \leq [S,S] \leq Z$. Moreover, $Y=\Omega_1(Y)$, since each $S \in \cs$ has exponent $p$.  By Lemma \ref{nea}, $Y$ has a subgroup of order $p^2$, so $Z<Y$. Therefore, $Y \in \mz(P)$. 

Assume for contradiction that $Y$ is extraspecial.  Set $Q:=\langle \{S:S \in \cs\} \rangle$.  Now $[Y,Y]=Z=Z(Y)$ and it follows that $[S,Y]=Z$ for each $S \in \cs$, since $[S,S] \leq Z$.  By commutator formulas (\ref{ce1}) and (\ref{ce2}), $[Q,Y]=Z$.  By Lemma \ref{es}, $Q=YC_Q(Y)$.  Note that $Y \cap C_Q(Y)=Z$.

Now $$Q=\Omega_1(Q)=Y\Omega_1(C_Q(Y)).$$ Indeed the first equality holds since $S=\Omega_1(S)$ for each $S \in \cs$.  To prove the second equality, we assume $x \in Q$ has order $p$ and write $x=yh$ with $y \in Y$ and $h \in C_Q(Y)$.  Then $$1=x^p=(yh)^p=y^ph^p=h^p,$$ since $Y$ has exponent $p$ and $[Y,h]=1$.  Therefore, $h \in \Omega_1(C_Q(Y))$ and $x \in Y\Omega_1(C_Q(Y))$ as claimed.

We claim that $C_Q(Y)$ is not cyclic.  Indeed, \begin{eqnarray*} C_Q(Y) & = & C_Q(Y) \cap Y \Omega_1(C_Q(Y)) \\ & = & \Omega_1(C_Q(Y))(Y \cap C_Q(Y)) \\ & = &\Omega_1(C_Q(Y))Z \\ & = & \Omega_1(C_Q(Y)), \end{eqnarray*} the second equality following from the Dedekind modular law (see for example \cite[(1.14)]{As}).  Therefore, if $C_Q(Y)$ is cyclic then $C_Q(Y)=Z$ and $Q=Y$.  However, as $|\cs| \geq 2$, we know that $Y \not\in \cs$ and it follows that $Q \neq Y$.

Assume now that $Z(C_Q(Y))$ is cyclic.  We may apply Lemma \ref{nea} to $C_Q(Y)$.  Every normal subgroup of $C_Q(Y)$ is centralized by $Y$ and therefore normal in $Q$.  Thus there is some $N \lhd Q$ such that \begin{itemize} \item $N$ is elementary abelian of order $p^2$, \item $N \not\leq Y$ (as $[Y,N]=1$ and $|Z(Y)|=p$), and \item every $\tilde{S} \in \mzs(C_Q(Y))$ contains $N$. \end{itemize}  By Lemma \ref{techlem}, each $S \in \mzs(Q)$ satisfies $S=Y\tilde{S}$ for some $\tilde{S} \in \mzs(C_Q(Y))$.   The contradiction $$Y<YN \leq \bigcap_{S \in \cs}S=Y$$ now follows, as $\cs \subseteq \mzs(Q)$.

Finally, assume $Z(C_Q(Y))$ is not cyclic.  Let $N=\Omega_1(Z(C_Q(Y))$.  Then $Z<N$ and $N \not\leq Y$.  Moreover, $N \leq Z(Q)$.  Let $S \in \cs$. Then $[S,N]=1$.  By commutator formulas (\ref{ce1}) and (\ref{ce2}), $[SN,SN]=Z$. Moreover, $SN=\Omega_1(SN)$.  It follows that $SN \in \mz(P)$.  Therefore, $N \leq S$, as $S \in \mzs(P)$.  Again  we get the contradiction $$Y<YN \leq \bigcap_{S \in \cs}S=Y.$$ 
\end{proof} 

\section{Examples} \label{pr3}

Here we prove Corollaries \ref{main3} and \ref{main4}.    Our first step is to produce, for every positive integer $m$ and every prime $p>2m+1$, a $p$-group $P$ of order $p^{2m+2}$ and exponent $p$ such that  \begin{equation} \label{ntriv} \{i:\wh_i(\de\a2(P)) \neq 0\}=\{0,m-1\}. \end{equation}   Recall that in Section \ref{pr1} we observed that when $Z(P)$ is cyclic, $\ce(P)$ consists of those $X \in \mzs(P)$ thst are extraspecial.  Thus by Theorem \ref{main1} and Proposition \ref{extra} (or by Corollary \ref{main2}), if $Z(P)$ is cyclic then the set of all $i>0$ such that $\wh_{i-1}$ is nontrival is the set of all $i>0$ such that $\mzs(P)$ contains an extraspecial $p$-group of order $p^{2i+1}$.  So, for our purposes, it is necessary and sufficient to produce $P$ of the desired order $p^{2m+2}$ and exponent $p$ such that \begin{itemize} \item $Z:=Z(P)$ is cyclic, and \item $\mzs(P)$ contains extraspecial groups of orders $p^3$ and $p^{2m+1}$ and no extraspecial groups of other orders. \end{itemize}

Let $X$ be an extraspecial group of exponent $p$ and order $p^{2m+1}$.  Let $Z=Z(X)$ and let $z$ generate $Z$.  There exist generators $x_1,\ldots,x_m,y_1,\ldots,y_m$ for $X$ such that $$[x_i,x_j]=[y_i,y_j]=1$$ for all $i,j \in [m]$ and $$[x_i,y_j]=\left\{ \begin{array}{ll} z & i=j, \\ 1 & i \neq j. \end{array} \right. $$ Identifying $Z$ with $\ff_p$, one gets a nondegenerate, alternating bilinear form $\la \cdot,\cdot \ra$ on $X/Z$, defined by $$\la Zx,Zy \ra:=[x,y]$$ (see for example \cite[(23.10)]{As}).

Set $$B:=\{g \in Aut(X):z^g=z\}.$$

The action of any $g \in B$ on $X$ induces a linear transformation on the $\ff_p$-vector space $X/Z$.  This transformation preserves the form $\la \cdot,\cdot \ra$.  Thus we have a homomorphism $\Phi$ from $B$ to the group $Sp(X/Z)$ of linear transformations preserving this form.  As shown by D.  L. Winter in \cite{Wi}, $\Phi$ is surjective and the kernel of $\Phi$ is the group $Inn(X)$ of inner automorphisms of $X$.  Thus we have a short exact sequence \begin{equation} \label{ses} 1 \rightarrow Inn(X) \rightarrow B \rightarrow Sp(X/Z) \rightarrow 1 \end{equation} of groups.  As noted by R. Griess in the introduction of \cite{Gr}, the sequence (\ref{ses}) splits.

Assume for the moment that there is some $\phi \in Sp(X/Z)$ such that \begin{itemize} \item $\phi$ has order $p$, and \item the unique eigenspace $C_{X/Z}(\phi)$ for $\phi$ is generated by $Zy_1$. \end{itemize}  (We will see shortly that such a $\phi$ exists whenever $p>2m$.) There is some $g \in B$ such that $|g|=p$ and $\Phi(g)=\phi$, since (\ref{ses}) splits.  

We form the semidirect product $P:=\la g \ra X$.  The group $P$ has order $p^{2m+2}$.  So, the nilpotence class of $P$ is at most $2m+1$.  Assuming $p>2m+1$, we see that $P$ has exponent $p$ by Theorem \ref{nc}, as $P=\Omega_1(P)$.

The group $P/Z$ is not abelian, since $C_{X/Z}(\phi) \neq X/Z$.  Therefore, $X \in \mzs(P)$, as $X$ is maximal in $P$.

There is some integer $j$ with $0 \leq j <p$ such that $[g,y_1]=z^j$, since $Zy_1 \in C_{X/Z}(\phi)$.  By (\ref{ce1}), $[x_1^{1-j},y_1]=z^{1-j}$ and $[gx_1^{1-j},y_1]=z$.   Therefore, the subgroup $S:=\langle gx_1^{1-j},y_1 \rangle \leq P$ is nonabelian of order $p^3$.  It follows that $S$ is extraspecial and lies in $\mz(P)$.  We claim that $S \in \mzs(P)$.  Indeed, say $S \leq T \in \mz(P)$.  Then $T/Z$ is contained in $C_{P/Z}(S/Z)$.  It follows that $T/Z \cap X/Z=\la Zy_1 \ra$, since $\Phi(gx_1^{1-j})=\phi$.  Now $|T/Z| \leq p^2$, since $[P/Z:X/Z]=p$.  It follows that $|T| \leq p^3=|S|$ and $T=S$ as claimed.

Let $T \in \mz(P)$.  We claim that either $T \leq X$ or $|T| \leq p^3$.  Indeed, assume $T \not\leq X$.  Then $T$ contains some $gh$, with $h \in X$, and $T/Z$ centralizes $Zgh$.  Now $C_{X/Z}(Zgh)=\la Zy_1 \ra$, since $\Phi(gh)=\phi$.  Therefore, $|T/Z| \leq p^2$ and $|T| \leq p^3$ as claimed.

We see now that every extraspecial group in $\mzs(P)$ other than $X$ has order $p^3$, since no extraspecial group has order $p^2$.   Next, we produce the desired linear transformation $\phi$, thereby proving the existence of groups $P$ satisfying (\ref{ntriv}). 

Let $V=X/Z$.  We use additive notation for our group operation on $V$ and write $\bx$ for $Zx \in V$.  Now $V$ has basis $\bx_1,\ldots,\bx_m,\by_1,\ldots,\by_m$ and our alternating form $\la \cdot,\cdot \ra$ is given by $$\la \bx_i,\bx_j \ra=\la\by_i,\by_j\ra=0$$ for all $i,j$, and $$\la \bx_i,\by_j \ra=-\la \by_j,\bx_i \ra=\left\{ \begin{array}{ll} 1 & i=j, \\ 0 & i \neq j. \end{array} \right. $$

Define the linear transformation $\phi$ by $$\ax_i:=\bx_i\phi:=(-1)^{m+1-i}\by_m+\sum_{j=i}^{m}(-1)^{j-i}\bx_j$$ for all $i$, $$\ay_i:=\by_i\phi:=\by_i+\by_{i-1}$$ for $2 \leq i \leq m$, and $$\ay_1:=\by_1\phi:=\by_1.$$

To show that $\la v\phi,w\phi \ra=\la v,w \ra$ for all $v,w \in V$, it suffices to examine the cases where $v,w$ lie in our basis.  Certainly $$\la \ay_k,\ay_l \ra=0=\la \by_k,\by_l \ra$$ for all $k,l$.  Also, \begin{eqnarray*} \la \ax_k,\ax_l \ra & = & \la (-1)^{m+1-k}\by_m+\sum_{s=k}^{m}(-1)^{s-k}\bx_s,(-1)^{m+1-l}\by_m+\sum_{t=l}^{m}(-1)^{t-l}\bx_t \ra \\ & = & \la (-1)^{m-k}(\bx_m-\by_m),(-1)^{m-l}(\bx_m-\by_m) \ra \\ & = & 0 \\ & = & \la \bx_k,\bx_l \ra. \end{eqnarray*}

For arbitrary $k$ and $1<l \leq m$,  \begin{eqnarray*} \la \ax_k,\ay_l \ra & = & \sum_{s=k}^{m}(-1)^{s-k}\la \bx_s,\by_l \ra+\sum_{t=k}^{m}(-1)^{t-k}\la \bx_t,\by_{l-1} \ra \\ & = & \left\{ \begin{array}{ll} 0+0 & k>l \\ 1+0 & k=l \\ (-1)^{l-k}+(-1)^{l-1-k} & k<l \end{array} \right.\\ & = & \la \bx_k,\by_l \ra. \end{eqnarray*}

Finally, \begin{eqnarray*} \la \ax_k,\ay_1 \ra & = & \sum_{s=k}^{m}(-1)^{s-k}\la \bx_s,\by_1 \ra \\ & = & \left\{\begin{array}{ll} 1 & k=1 \\ 0 & k>1 \end{array} \right. \\ & = & \la \bx_k,\by_1 \ra. \end{eqnarray*}

With respect to the ordering $x_1,\ldots,x_m,y_m,\ldots,y_1$ of our basis (note the $y_i$ appear in reverse order), the matrix of $\phi$ is upper triangular with $1$ on the diagonal.  Therefore, $(\phi-1)^{2m}=0$.  So, if $p>2m$ then $$0=(\phi-1)^p=\phi^p-1$$ (the second equality following from the binomial theorem) and $\phi$ has order $p$.

Now we compute $C_V(\phi)$, the unique eigenspace of $\phi$.  Pick
\[
v=\sum_{i=1}^m\alpha_i \bx_i+\sum_{j=1}^{m}\beta_j \by_j,
\]
an arbitrary element of $V$.  Write
\[
v\phi=\sum_{i=1}^{m}\gamma_i\bx_i+\sum_{j=1}^{m}\delta_j\by_j.
\]
Then $v \in C_V(\phi)$ if and only if $\alpha_i=\gamma_i$ and $\beta_j=\delta_j$ for all $i,j$.  We calculate \begin{equation} \label{xeq} \gamma_i=\sum_{s=1}^{i}(-1)^{i-s}\alpha_s \end{equation} for $1 \leq i \leq m$, \begin{equation} \label{yeq} \delta_j=\beta_j+\beta_{j+1} \end{equation} for $1 \leq j<m$, and \begin{equation} \label{ymeq} \delta_m=\beta_m+\sum_{s=1}^{m}(-1)^{m+1-s}\alpha_s. \end{equation}

Say $v \in C_V(\phi)$.  From (\ref{xeq}) we conclude that $\alpha_i=0$ for $1 \leq i \leq m-1$.  With this conclusion in hand, we obtain from (\ref{ymeq}) that $\alpha_m=0$.  From (\ref{yeq}) we conclude that $\beta_j=0$ for $2 \leq j \leq m$.  Certainly $\by_1 \in C_V(\phi)$.  We see now that $$C_V(\phi)=\la\by_1\ra$$ as desired.

We observe now that, with $P$ a central product of $P_1$ and $P_2$ as in Lemma \ref{techlem} and $X_i \in \mzs(P_i)$ for each $i$, the product $X_1X_2$ is extraspecial if and only if both $X_i$ are extraspecial.  Moreover, if each $X_i$ is extraspecial of order $p^{2m_i+1}$ (and therefore $\de\a2(X_i)$ has reduced homology concentrated in degree $m_i-1$) then $X_1X_2$ has order $p^{2(m_1+m_2)+1}$ (and $\de\a2(X_1X_2)$ has reduced homology concentrated in degree $m_1+m_2-1$).  Corollary \ref{main3} is obtained from Lemma \ref{techlem} by taking $P_1$ be a group of exponent $p$ and order $p^{2k+4}$ with nontrivial reduced homology concentrated in degrees $0$ and $k$, as constructed above (with $k=m+1$) and taking $P_2$ to be extraspecial of order $p^{2t+1}$ and exponent $p$.  Corollary \ref{main4} follows from Lemma \ref{techlem}, Proposition \ref{extra} and Corollaries \ref{main2} and \ref{main3}.   Indeed, Proposition \ref{extra} and Corollary \ref{main3} show that $I \in \Omega$ in the cases $|I|=1$ and $|I|=2$, respectively.  Moreover, $\emptyset \in \Omega$, since $\a2(P)$ is contractible if $P$ is elementary abelian of order at least $p^2$.  Now assume that $P_1$ and $P_2$ both have cyclic center and exponent $p$, and that the set of orders of elements of $\mzs(P_i)$ is $\{2j+1:j \in I_i\}$ for $i=1,2$. Let $P$ be a central product of $P_1$ and $P_2$. Then, by Lemma \ref{techlem}, the set of orders of elements of $\mzs(P)$ is $\{2(i_1+i_2)+1:i_1 \in I_1,i_2 \in I_2\}$.  Part (2) of Corollary \ref{main4} now follows from Corollary \ref{main2}.


\begin{thebibliography}{BoTh}

\bibitem[As]{As} M. Aschbacher, {\it Finite Group Theory}, Cambridge University Press, 2000.

\bibitem[Bj]{Bj} A. Bj\"orner, Topological methods. Handbook of combinatorics, Vol. 1, 2, 1819Ð1872, Elsevier, Amsterdam, 1995. 

\bibitem[Bor]{Bor} D. Bornand, Elementary abelian subgroups in $p$-groups with a cyclic derived subgroup. J. Algebra 335 (2011), 301-318.

\bibitem[BoTh]{BoTh} S. Bouc and J. Th\'evenaz, The poset of elementary abelian subgroups of rank at least $2$.  Monogr. Enseign. Math. 40 (2008) 41-45.

\bibitem[Br1]{Br1} K. S. Brown, Euler characteristics of discrete groups and $G$-spaces. 
Invent. Math. 27 (1974), 229-264.

\bibitem[Br2]{Br2} K. S. Brown, Euler characteristics of groups: the $p$-fractional part. Invent. Math. 29 (1975), no. 1, 1-5. 

\bibitem[Gor]{Gor} D. Gorenstein, Finite groups. Second edition. Chelsea Publishing Co., New York, 1980.

\bibitem[GLS]{GLS} D. Gorenstein, R. Lyons and R. Solomon, The classification of the finite simple groups. Number 2. Part I. Chapter G. (English summary) 
General group theory. Mathematical Surveys and Monographs, 40.2. American Mathematical Society, Providence, RI, 1996. 

\bibitem[Gr]{Gr} R. L. Griess, Jr., Automorphisms of extra special groups and nonvanishing degree 2 cohomology. Pacific J. Math. 48 (1973), 403-422.

\bibitem[Ha1]{Ha1} P. Hall, A contribution to the theory of groups of prime power order.  Proc. London Math. Soc. (2) 36 (1934), 29-95.

\bibitem[Ha2]{Ha2} P. Hall, On a theorem of Frobenius.  Proc. London Math. Soc. (2) 40 (1935), 468-501.

\bibitem[Ha3]{Ha3}  P. Hall, The collected works of Philip Hall. Compiled and with a preface by K. W. Gruenberg and J. E. Roseblade. With an obituary by Roseblade. Oxford Science Publications. The Clarendon Press, Oxford University Press, New York, 1988.

\bibitem[Hat]{Hat} A. Hatcher, Algebraic topology. Cambridge University Press, Cambridge, 2002.

\bibitem[KrTh]{KrTh} C. Kratzer and J. Th\'evenaz, Type d'homotopie des treillis et treillis des sous-groupes d'un groupe fini. Comment. Math. Helv. 60 (1985), no. 1, 85-106. 

\bibitem[Qu]{Qu} D. Quillen, Homotopy properties of the poset of nontrivial $p$-subgroups of a group. Adv. in Math. 28 (1978), no. 2, 101-128.

\bibitem[Sm]{Sm} S. D. Smith, Subgroup complexes. Mathematical Surveys and Monographs, 179. American Mathematical Society, Providence, RI, 2011.

\bibitem[Suz]{Suz} M. Suzuki, Group theory. II. Translated from the Japanese. Grundlehren der Mathematischen Wissenschaften [Fundamental Principles of Mathematical Sciences], 248. Springer-Verlag, New York, 1986.

\bibitem[Wa]{Wa} M. L. Wachs, Poset topology: tools and applications. Geometric combinatorics, 497Ð615, IAS/Park City Math. Ser., 13, Amer. Math. Soc., Providence, RI, 2007.

\bibitem[Wi]{Wi} D. L. Winter, The automorphism group of an extraspecial $p$-group. Rocky Mountain J. Math. 2 (1972), no. 2, 159-168.

\end{thebibliography}
\end{document}